\renewenvironment{equation*}{\[}{\]\ignorespacesafterend}
\crefname{algocf}{algorithm}{algorithms}
\Crefname{algocf}{Algorithm}{Algorithms}
\title{Low-rank cross approximation of function-valued tensors for reduced-order modeling of parametric PDEs\thanks{S.B. was funded by the Disruptive Innovation – Early Career Seed Money funding program of the Austrian Academy of Sciences (\"OAW) and the Austrian Science Fund (FWF).
This research was funded in part by the Austrian Science Fund (FWF) project \href{https://doi.org/10.55776/F65}{10.55776/F65}. 
M.O. was supported in part by the U.S. National Science Foundation under award DMS-2309197.
For open-access purposes, the authors have applied a CC BY public copyright license to any author-accepted manuscript version arising from this submission.}}
\author{
Stanislav Budzinskiy\thanks{Faculty of Mathematics, University of Vienna, Kolingasse 14-16, 1090 Vienna, Austria (\email{stanislav.budzinskiy\allowbreak@univie.ac.at}, \email{vladimir.kazeev\allowbreak@univie.ac.at}).}
\and
Vladimir Kazeev\footnotemark[2]~\thanks{Research Network Data Science, University of Vienna, Kolingasse 14-16, 1090 Vienna, Austria}
\and
Maxim Olshanskii\thanks{Department of Mathematics, University of Houston, 3551 Cullen Blvd, Houston, Texas 77204-3008, USA (\email{maolshanskiy\allowbreak@uh.edu}).}
}
\newcommand{\Clx}{\mathbb{C}}
\newcommand{\Real}{\mathbb{R}}
\newcommand{\F}{\mathbb{F}}
\newcommand{\N}{\mathbb{N}}
\newcommand{\Hilb}{\mathrm{H}}
\newcommand{\set}[3][]{#1\{ #2 : #3 #1\}}
\newcommand{\argmax}{\mathrm{argmax}}
\newcommand{\Bensor}[1]{\bm{\Tensor{#1}}}
\newcommand{\Tensor}[1]{{\mathscr{#1}}}
\newcommand{\Batrix}[1]{\bm{\Matrix{#1}}}
\newcommand{\Matrix}[1]{\mathsf{#1}}
\newcommand{\trans}{\intercal}
\newcommand{\pinv}{\dagger}
\newcommand{\krp}{\mathbin{\otimes}}
\newcommand{\rank}[2][]{\mathrm{rank} #1( #2 #1)}
\newcommand{\Rank}[3][]{\mathrm{rank_{#2}} #1( #3 #1)}
\newcommand{\Index}[1]{{#1}}
\newcommand{\cross}[3][]{\mathrm{cross}#1(#2, #3 #1)}
\newcommand{\Cross}[4][]{\mathrm{cross}_{\mathrm{#4}}#1(#2, #3 #1)}
\newcommand{\Row}[4][]{\mathrm{rows}_{#4} #1( #2, #3 #1)}
\newcommand{\hosvd}[3][]{\mathrm{hosvd}#1(#2, #3 #1)}
\newcommand{\Norm}[3][]{#1\| #2 #1\|_{\mathrm{#3}}}
\newcommand{\Dotp}[4][]{#1\langle #2, #3 #1\rangle_{\mathrm{#4}}}
\newcommand{\scup}[1]{\textup{\textsc{#1}}}
\begin{document}
\maketitle

\begin{abstract}
The paper considers function-valued tensors, viewed as multidimensional arrays with entries in an abstract Hilbert space. 
Despite the absence of the algebraic structure of a field, the geometric inner-product structure suffices to introduce the Tucker rank, higher-order SVD, and Tucker-cross decomposition for function-valued tensors.
An adaptive cross-approximation algorithm is developed to compute low-rank approximations of such tensors.
The framework is motivated by, and applied to, model order reduction of the parameter-to-solution map for a parametric PDE.
The resulting reduced-order model can be interpreted as an encoder-decoder scheme with a nonlinear encoder and a multilinear decoder.
The performance of the proposed non-intrusive approximation method is demonstrated in numerical examples for two nonlinear parametric PDE systems.
\end{abstract}

\begin{keywords}
function-valued tensors, low-rank decomposition, cross approximation, ROM, parametric PDEs 
\end{keywords}

\begin{MSCcodes}
15A69, 65D40, 65F55, 65N99 
\end{MSCcodes}

\section{Introduction}
The need for solving a parametric PDE for varying parameter values arises frequently in inverse problems, optimal control, uncertainty quantification, and digital-twin applications~\cite{tarantola2005inverse, biegler2003large, torzoni2024digital}.
\emph{Reduced-order modeling} (ROM) aims to extract the similarity of the PDE instances corresponding to distinct parameter values and to exploit it for reducing the cost of solving the PDE.  
A wide spectrum of ROM techniques exists, spanning from physics-based projection methods, such as POD-Galerkin and reduced basis approaches~\cite{berkooz1993proper, hesthaven2016certified, quarteroni2015reduced}, to fully data-driven models enabled by machine learning and nonlinear regression techniques~\cite{kutz2016dynamic, hesthaven2018non, willard2020integrating}.

This paper applies methods of multilinear (tensor) algebra to develop a method for the data-driven ROM of parametric PDEs.
The proposed ROM method approximates the map from the parameter domain to a \emph{quantity of interest} (QoI), which may be either a (vector-valued) functional of the PDE solution or the solution itself.
The key dimension-reduction technique underlying the proposed method is a low-rank cross approximation of function-valued tensors, which leads to a data-driven ROM based on the non-intrusive sparse sampling of the parametric solution manifold.

Function-valued tensors are currently not as well understood as their scalar-valued counterparts, possibly due to the absence of the algebraic structure available to tensors over a field.
In this paper, we show that arrays with values in a Hilbert (or Euclidean) space admit the generalization of basic notions and techniques of low-rank tensor approximation: we introduce the Tucker rank, the higher-order singular value decomposition (HOSVD), and the Tucker-cross decomposition for such function-valued tensors.
We also present an adaptive cross-approximation algorithm for computing low-rank approximations of such tensors based on subsets of their entries.

Together with the novel notions and techniques of low-rank approximation for function-valued tensors, we present herein their application to model-order reduction for parametric PDEs.
In particular, we show that the resulting ROM method can be interpreted as an encoder-decoder scheme with a \emph{nonlinear encoder} and a \emph{multilinear decoder}.
This can be contrasted with the well-known POD-Galerkin and {reduced basis} (RB) approaches, which are based on the classical low-rank matrix approximation and can be viewed as linear encoder-decoder schemes performing projection onto linear latent spaces.
At the same time, in comparison with autoencoder-based ROM methods based on neural networks, the tensor-based methods remain tractable, and their interpolation properties appear amenable to rigorous analysis.

\subsection{Outline}
Since function-valued tensors do not routinely appear in the literature, and the ideas behind their application to ROM can be rather intricate, the exposition in this paper begins with the matrix case in \Cref{sec2}.
We first recall a few well-known results on the cross approximation of real-valued matrices and then proceed to matrices over Hilbert spaces.
Understanding the notions of rank, cross decomposition, and their application to ROM in the matrix setting makes their subsequent extension to function-valued tensors in \Cref{sec:cross_tucker} more straightforward.
\Cref{sec4} introduces an algorithm for adaptive function-valued cross approximation in the Tucker format, which in the present paper serves as the main model-reduction technique.
Finally, \Cref{sec:numer} illustrates the approach with numerical experiments, in which the proposed ROM method is applied for the approximation of parameter-to-solution maps:
a system describing the slow motion of a non-Newtonian incompressible fluid with parametrized rheology and boundary conditions, and a parametric Monge--Amp\`ere equation.

\subsection{Related work on function-valued tensors}
Vectors, matrices, and tensors with function-valued entries have been considered previously, though in settings different from ours.
In particular, tensors with entries from a \emph{reproducing kernel Hilbert space} (RKHS) were studied in~\cite{townsend2015continuous, shustin2022semi, larsen2024tensor, tang2024tensor, han2024guaranteed}.
A distinctive property of RKHS functions is that their pointwise evaluation is well-defined.
Consequently, what we regard as an order-$d$ function-valued tensor over an RKHS would be treated in these works as an order-$(d+1)$ real-valued tensor with one continuous dimension (\emph{mode}).
The low-rank approximation methods of~\cite{larsen2024tensor, tang2024tensor, han2024guaranteed} and the cross approximation of function-valued vectors in~\cite{townsend2015continuous} rely on the point evaluation of function-valued entries, which is not generally feasible in typical PDE settings.
In contrast, the framework we develop herein is based on accessing entries of a tensor exclusively as elements of the underlying Hilbert space, which is not required to allow for point evaluation and can therefore be arbitrary.
As a result, the framework applies to more general function-valued tensors and is suitable for typical PDE settings.

For a Hilbert space $\Hilb$ and $n_1,\ldots,n_d \in \N$, the space $\Hilb^{n_1 \times \cdots \times n_d}$ of tensors of size $n_1 \times \cdots \times n_d$ with values in $\Hilb$ is isometrically isomorphic to a tensor-product Hilbert space $\Hilb \krp \Real^{n_1} \krp \cdots \krp \Real^{n_d}$.
As such, it is covered by the treatment of tensor products of arbitrary Banach spaces given in~\cite{hackbusch2019tensor};
in particular, the HOSVD approximation naturally generalizes to tensor products of arbitrary Hilbert spaces~\cite[\S~10]{hackbusch2019tensor}.
In contrast, our particular setting of $\Hilb^{n_1 \times \cdots \times n_d} \simeq \Hilb \krp \Real^{n_1} \krp \cdots \krp \Real^{n_d}$ allows for generalizing not only the HOSVD approximation but also the Tucker-cross approximation.

\subsection{Related work on tensor methods for parametric PDEs}
\label{sec:related_pde}
The \emph{low-rank tensor decomposition} (LRTD) Galerkin ROM for parametric dynamical systems was introduced in~\cite{mamonov2022interpolatory} and further developed and analyzed in~\cite{mamonov2024tensorial, mamonov2025priori, olshanskii2025approximating}.
In these works, the POD component of the Galerkin ROM was replaced by the LRTD to provide a parameter-specific local basis for approximating the system dynamics in a reduced latent space.
Reduced-order models for parametric PDEs based on the low-rank completion of tensors sampled by slices was studied in~\cite{mamonov2024slice}.
That approach can also be interpreted as finding a low-rank approximation of a vector-valued tensor from sparse sampling; however, it does not exploit the Hilbert structure of the space to which the tensor entries belong. 

For the approximation of scalar-valued QoI of parametric PDEs, cross approximation was used in \cite{ballani2015hierarchical}.
Similar approaches were developed in \cite{bigoni2016spectral, gorodetsky2019continuous, strossner2024approximation}.

Parameter-to-solution maps can be expanded with respect to fixed bases in the parameter and spatial domains, with the resulting coefficient tensor approximated with LRTD~\cite{khoromskij2011tensor}.
Following this line of research:
(i) Richardson iteration with adaptive rank truncation was proposed in~\cite{kressner2011low, bachmayr2018parametric} to obtain an approximate least-squares solution of the parametric PDE~\cref{eq:model_pde} below, whose coefficient tensor is represented in a low-rank hierarchical format;
(ii) the tensor-train (TT) and the hierarchical tensor representations were employed in~\cite{khoromskij2010quantics, kressner2011low, dolgov2018direct, dolgov2019hybrid}, where discretization of~\cref{eq:model_pde}, with collocation in the parameter space, yields block-diagonal linear systems with low-rank structure across the blocks, the TT structure being imposed analytically in~\cite{khoromskij2010quantics, kressner2011low, dolgov2018direct} and via the TT cross approximation of the parametric coefficient in~\cite{dolgov2019hybrid};
(iii) a stochastic version of~\cref{eq:model_pde} was studied in~\cite{dolgov2015polynomial}, where the Karhunen–Lo\`eve expansion of the stochastic diffusion coefficient, followed by block TT cross approximation, was used for a stochastic formulation of the problem.
The methods described above rely on the linearity of~\cref{eq:model_pde}, as they focus on assembling and solving tensor-structured linear systems; in contrast, our approach also applies to nonlinear PDEs.

The non-intrusive method of~\cite{eigel2019non, eigel2019variational} treats the parametric PDE as a black box, making no assumptions about its structure and relying solely on sampled solutions---much like our approach.
Given a set of Monte Carlo samples,~\cite{eigel2019non, eigel2019variational} fits a low-rank TT decomposition of the coefficient tensor to these data in a least-squares sense using alternating steepest descent.
Our method differs in two essential aspects:
(i) adaptive cross approximation is an “active-learning’’ technique that \emph{generates samples}, whereas~\cite{eigel2019non, eigel2019variational} performs “passive learning’’ on \emph{given samples};
(ii) our approach is based on \emph{interpolation}, while that of~\cite{eigel2019non, eigel2019variational} relies on \emph{least-squares approximation}.

A block TT-cross algorithm was developed in~\cite{dolgov2015polynomial} to construct a block TT decomposition from $N$-blocked samples.
From a vector-valued perspective, this algorithm implicitly equips the space~$\Real^N$ with the standard inner product.
Hence, following our results in \Cref{sec:cross_tucker}, the resulting approximation can be regarded as a ``mathematically correct'' cross approximation for that particular Hilbert space.
The associated inner product is, however, not suitable in the context of a parametric PDE.
In the present paper, the formalism of function-valued tensors incorporates the Hilbert-space structure into all computations and can therefore be viewed as a way to extend the block TT-cross algorithm to parameter-to-solution maps.
Its use in~\cite{dolgov2015polynomial} is limited to assembling the linear system.
\section{Low-rank cross approximation as reduced-order modeling}
\label{sec2}
Our motivating example is a diffusion equation with a coefficient depending on two parameters $\alpha, \beta \in [0,1]$:
\begin{equation}
\label{eq:model_pde}
    \mathrm{div}\left(a(\Matrix{x}; \alpha,\beta)\nabla u(\Matrix{x}; \alpha,\beta)\right) = f(\Matrix{x}),
    \quad \Matrix{x} \in \Omega, \quad
    \left. u \right|_{\partial \Omega} = 0.
\end{equation}
Equation \cref{eq:model_pde} defines a parametric family of mathematical models.
In numerical optimization or inverse engineering, one often seeks to select a single model from this family according to certain criteria, i.e., by minimizing a cost functional.

These criteria may depend not on the solution of \cref{eq:model_pde} itself, but on a QoI computed from it.
In the simplest case, the QoI is a scalar-valued function $\eta : [0,1]^2 \to \Real$, such as the spatial average of the solution:
\begin{equation*}
    \eta(\alpha, \beta) = \int_{\Omega} u(\Matrix{x}; \alpha, \beta) d\Matrix{x}.
\end{equation*}

In an optimization loop, the QoI may need to be evaluated at many parameter values, requiring repeated numerical solutions of \cref{eq:model_pde} and leading to high computational cost.
A common strategy to mitigate this is to construct a ROM, i.e., to approximate $\eta$ in a data-driven fashion.

There exist many data-driven approximation methods, including sparse grids \cite{bungartz2004sparse}, kernel methods \cite{wendland2004scattered}, sparse polynomial approximation \cite{adcock2022sparse}, and neural networks \cite{devore2021neural}.
Here, we focus on low-rank ROMs, constructed for example as follows:
\begin{enumerate}
    \item Evaluate $\eta$ on a Cartesian $m \times n$ grid in the parameter domain and use these values to define an interpolant of $\eta$.
    \item Compute a rank-$r$ approximation of the matrix $\Matrix{A} = [\eta(\alpha_i, \beta_j)] \in \Real^{m \times n}$ via truncated singular value decomposition (SVD).
\end{enumerate}

This yields an “approximate interpolant” of the QoI.
The low-rank approximation \emph{compresses} the exact interpolant from $mn$ parameters to $O(mr + nr)$.
Its main drawback is that the full matrix $\Matrix{A}$ must be evaluated and stored, which is prohibitive when $mn$ is large and the PDE is expensive to solve.
Since the resulting low-rank ROM is characterized by $O(mr + nr)$ parameters, one may seek to construct it using only $O(mr + nr)$ function evaluations (i.e., PDE solves).
In the case of two parameters and a scalar QoI, the matrix cross approximation may serve this purpose.

\subsection{Matrix cross approximation}
\label{subsec:scalar_cross}
The following \emph{interpolation identity} can serve as the basis for an efficient low-rank ROM.
Every matrix $\Matrix{A} \in \Real^{m \times n}$ of rank $r$ contains at least one invertible submatrix $\Matrix{A}(I_A, J_A) \in \Real^{r \times r}$, and any such a submatrix allows for exactly recovering $\Matrix{A}$ from its $(m + n - r)r$ entries by \emph{cross interpolation}:
\begin{equation}
\label{eq:interpolation_matrix}
    \Matrix{A} = \Matrix{A}(:, J_A)\, \Matrix{A}(I_A, J_A)^{-1}\, \Matrix{A}(I_A, :).
\end{equation}

The right-hand side of \cref{eq:interpolation_matrix} extends naturally to arbitrary nonempty index sets $I \subseteq [m]$ and $J \subseteq [n]$ at the cost of possibly losing equality, resulting in \emph{cross approximation} by cross interpolation.
Furthermore, the invertibility requirement can be relaxed by replacing the inverse with the pseudoinverse:
\begin{equation}
\label{eq:cross_matrix}
    \cross{\Matrix{A}}{I, J} = \Matrix{A}(:, J)\, \Matrix{A}(I, J)^{\pinv}\, \Matrix{A}(I, :) \in \Real^{m \times n}.
\end{equation}
The cross approximation $\Matrix{B} = \cross{\Matrix{A}}{I, J}$ inherits the interpolatory property of \cref{eq:interpolation_matrix}: it satisfies $\Matrix{B}(I,J) = \Matrix{A}(I,J)$ and exactly recovers $\Matrix{A}$ whenever $\rank{\Matrix{A}(I,J)} = \rank{\Matrix{A}}$; see, e.g., \cite{hamm2020perspectives}.

When $\rank{\Matrix{A}(I,J)} < \rank{\Matrix{A}}$, the approximation quality of \cref{eq:cross_matrix} depends on the choice of $I$ and $J$.
The \emph{adaptive cross approximation} (ACA) algorithm \cite{bebendorf2000approximation} constructs these sets iteratively, selecting new index pairs based on the largest residual entries.
Although ACA offers only weak theoretical guarantees \cite{cortinovis2020maximum}, it performs very well in practice.
Stronger approximation bounds hold when $\Matrix{A}(I,J)$ has maximum \emph{volume}---defined as the product of singular values---among all $|I|\times|J|$ submatrices \cite{goreinov1997theory}.

A cross approximation computed by ACA can be used to define a low-rank ROM for a scalar-valued QoI.
When ACA employs rook pivoting to select index pairs, it requires access to $O(m+n)$ matrix entries per iteration—equivalently, this many PDE solves in our setting.
Hence, $O(mr + nr)$ QoI samples suffice to construct a rank-$r$ ROM, as desired.

\subsection{Bi-parametric PDE with vector- and function-valued QoI}
Let us return to the parametric PDE \cref{eq:model_pde} and consider a vector-valued QoI.
For example, this can be a vector-function $\eta : [0,1]^2 \to \Real^k$ that computes the first $k$ moments of the solution.
The ``ultimate'' vector-valued QoI is the full-order-model solution of the PDE itself with $k$ being the number of degrees of freedom.
Even more generally, we can consider a QoI with values in an abstract function space.
Under standard regularity assumptions on the diffusion coefficient and the right-hand side, the weak solution of~\cref{eq:model_pde} belongs to the Sobolev space $\Hilb = \Hilb^{1}_0(\Omega)$; see \cite{brezis2011functional}.
If these assumptions are satisfied for every parameter value, one can consider the \emph{parameter-to-solution} map $\eta : [0,1]^2 \to \Hilb$ as the function-valued QoI.

Attempting to construct a ROM of the parameter-to-solution map $\eta$ along the same lines as described above, one ends up with an $m \times n$ array whose entries are elements of $\Hilb$.
Below, we give meaning to low-rank approximation and, more importantly, cross approximation for such arrays---leading to a sample-efficient low-rank ROM for vector-valued QoIs such as the parameter-to-solution map.

\subsection{Function-valued matrix cross approximation}
Consider a general setting where $\Hilb$ is a Hilbert space over $\F \in \{ \Real, \Clx \}$.
Two-dimensional arrays $\Batrix{A} \in \Hilb^{m \times n}$, referred to as \emph{Bochner matrices}, were studied in~\cite{budzinskiy2025matrices}.
They can be transposed $\Batrix{A}^{\trans} \in \Hilb^{n \times m}$ and multiplied with matrices $\Matrix{B} \in \F^{k \times m}$ and $\Matrix{C} \in \F^{n \times l}$ according to the standard formulas to give $\Matrix{B} \Batrix{A} \Matrix{C} \in \Hilb^{k \times l}$, but two function-valued matrices cannot be multiplied even if they have conforming sizes.
The \emph{adjoint} of $\Batrix{A}$ is defined as a linear operator $\Batrix{A}^{\ast} : \Hilb^{m} \to \F^{n}$,
\begin{equation*}
    \Batrix{A}^\ast \Batrix{b} = \begin{bmatrix}
        \Dotp{\Batrix{b}}{\Batrix{A}(:,1)}{\ell_2(\Hilb)} &
        \cdots &
        \Dotp{\Batrix{b}}{\Batrix{A}(:,n)}{\ell_2(\Hilb)}
    \end{bmatrix}^\trans, \quad \Batrix{b} \in \Hilb^m,
\end{equation*}
where $\Dotp{\Batrix{b}}{\Batrix{a}}{\ell_2(\Hilb)} = \sum_{i = 1}^{m} \Dotp{\Batrix{b}(i)}{\Batrix{a}(i)}{\Hilb}$ is an inner product in $\Hilb^m$.
By convention, the adjoint is applied columnwise to $\Batrix{B} \in \Hilb^{m \times k}$, so that $\Batrix{A}^{\ast} \Batrix{B} \in \F^{n \times k}$.
A function-valued matrix $\Batrix{Q} \in \Hilb^{m \times n}$ is said to have orthonormal columns if $\Batrix{Q}^{\ast} \Batrix{Q} = \Matrix{I}_{n}$.

The \emph{column-rank} of $\Batrix{A}$, $r = \Rank{c}{\Batrix{A}}$, is defined as the number of its linearly independent columns.
Function-valued matrices admit an SVD, $\Batrix{A} = \Batrix{U} \Matrix{\Sigma} \Matrix{V}^{\ast}$, where $\Batrix{U} \in \Hilb^{m \times r}$ is a function-valued matrix with orthonormal columns, $\Matrix{\Sigma} \in \Real^{r \times r}$ is a diagonal matrix with positive nonincreasing entries, and $\Matrix{V} \in \F^{n \times r}$ is a matrix with orthonormal columns.
Such an SVD can be computed via a QR decomposition of $\Batrix{A}$ followed by computing an SVD of the scalar-valued triangular factor.
The \emph{pseudoinverse} of a nonzero function-valued matrix $\Batrix{A}$ is a linear operator $\Batrix{A}^{\pinv} : \Hilb^{m} \to \F^{n}$ given by $\Batrix{A}^\pinv = \Matrix{V} \Matrix{\Sigma}^{-1} \Batrix{U}^\ast$,
and it is zero when $\Batrix{A}$ is zero.

We have introduced the necessary definitions and proceed to the cross approximation of function-valued matrices.
The interpolation identity \cref{eq:interpolation_matrix} extends as follows:
\begin{equation}
\label{eq:precross_batrix}
    \Batrix{A} = \underbrace{\Batrix{A}(:, J)}_{\Hilb^{m \times |J|}} \cdot \underbrace{\Batrix{A}(I, J)^{\pinv} \Batrix{A}(I, :)}_{\F^{|J| \times n}}
\end{equation}
if and only if $\Rank{c}{\Batrix{A}(I, J)} = \Rank{c}{\Batrix{A}}$.
Since $\Batrix{A}(I, J)^{\pinv}$ is a linear operator, it cannot be left as a stand-alone factor in the decomposition, so the size of the index set $|I|$ ends up being ``hidden.''
This is the first distinction from \cref{eq:interpolation_matrix} and \cref{eq:cross_matrix}.

The second one arises as we consider minimal $J$ so that the columns of $\Batrix{A}(:, J)$ are linearly independent.
When $\Hilb = \F$, it is not possible to further compress these columns.
But for general function-valued matrices, the \emph{row-rank} (the column-rank of the transpose) is not necessarily equal to the column-rank, so we can apply \cref{eq:precross_batrix} to $\Batrix{A}(:, J)^{\trans}$.
This observation leads to the definition of cross approximation for function-valued matrices:
\begin{equation}
\label{eq:cross_batrix}
    \cross{\Batrix{A}}{I, J} = \underbrace{\Big\{ [\Batrix{A}(I,J)^{\trans}]^{\pinv} \Batrix{A}(:,J)^{\trans} \Big\}^{\trans}}_{\F^{m \times |I|}} \cdot \underbrace{\Batrix{A}(I, J)}_{\Hilb^{|I| \times |J|}} \cdot \underbrace{\Batrix{A}(I,J)^{\pinv} \Batrix{A}(I,:)}_{\F^{|J| \times n}}.
\end{equation}
Note that transposition and pseudoinversion do not commute for function-valued matrices, hence the form of the left factor.
The cross approximation $\Batrix{B} = \cross{\Batrix{A}}{I, J}$ enjoys  interpolation properties similar to \cref{eq:cross_matrix}: the submatrix $\Batrix{B}(I,J) = \Batrix{A}(I,J)$ is always preserved, and $\Batrix{B} = \Batrix{A}$ whenever the column-rank and row-rank of $\Batrix{A}(I,J)$ coincide with those of $\Batrix{A}$.

The ACA algorithm was extended to function-valued matrices in \cite{budzinskiy2025matrices}, providing a sample-efficient way to select the index sets $I$ and $J$ for \cref{eq:cross_batrix}.

\begin{remark}
\label{remark:ip}
If we fix $I$ and $J$ and change the inner product, the cross approximation \cref{eq:cross_batrix} will change.
This property highlights that function-valued cross approximation takes the geometry of the Hilbert space into account.
\end{remark}

\subsection{Reduced-order modeling}
\label{subsec:rom_matrix}
The discussion of function-valued cross approximation has been purely algebraic so far; let us now view it from a different perspective.
For both parameters, consider one-dimensional grids with nodes ${ \alpha_1, \ldots, \alpha_m }$ and ${ \beta_1, \ldots, \beta_n }$, and fix interpolation basis functions ${ \varphi_1, \ldots, \varphi_m }$ and ${ \psi_1, \ldots, \psi_n }$ satisfying $\varphi_i(\alpha_j) = \delta_{ij}$ for $i,j \in [m]$ and $\psi_i(\beta_j) = \delta_{ij}$ for $i,j \in [n]$.
The first (implicit) step in constructing the low-rank ROM is interpolation:
\begin{equation*}
    \eta(\alpha, \beta) \approx \sum_{i = 1}^{m} \sum_{j = 1}^{n} \eta(\alpha_i, \beta_j) \varphi_i(\alpha) \psi_{j}(\beta),
\end{equation*}
which we can rewrite using the function-valued matrix $\Batrix{A}$ of samples as
\begin{equation*}
    \eta(\alpha, \beta) \approx \begin{bmatrix}
        \varphi_{1}(\alpha) & \cdots & \varphi_{m}(\alpha)
    \end{bmatrix} \Batrix{A} \begin{bmatrix}
        \psi_{1}(\beta) & \cdots & \psi_{n}(\beta)
    \end{bmatrix}^\trans.
\end{equation*}
Once the grid nodes and basis functions have been chosen, the offline (``learning'') phase of ROM is carried out with an ACA-like algorithm.
This results in a cross approximation of the form
\begin{equation*}
    \Batrix{A} \approx \Matrix{F} \cdot \Batrix{A}(I,J) \cdot \Matrix{P}^{\trans}, \quad \Matrix{F} \in \Real^{m \times |I|}, \quad \Matrix{P} \in \Real^{n \times |J|},
\end{equation*}
and in the associated cross-approximation-based ROM given by
\begin{equation}
\label{eq:lr_rom}
    \hat{\eta}(\alpha, \beta) = \bigg\{ \begin{bmatrix}
        \varphi_{1}(\alpha) & \cdots & \varphi_{m}(\alpha)
    \end{bmatrix} \Matrix{F} \bigg\} \cdot \Batrix{A}(I,J) \cdot \bigg\{ \begin{bmatrix}
        \psi_{1}(\beta) & \cdots & \psi_{n}(\beta)
    \end{bmatrix} \Matrix{P} \bigg\}^\trans.
\end{equation}

\paragraph{Encoder-decoder scheme}
In the fields of ROM and operator learning, the respective models are often represented as compositions of two building blocks \cite{bhattacharya2021model}:
\begin{enumerate}
    \item an encoder that maps the input into its latent representation;
    \item a decoder that maps the latent representation to the output space.
\end{enumerate}
For the low-rank ROM \cref{eq:lr_rom}, the \emph{encoder} is given by the interpolation step
\begin{equation*}
    (\alpha, \beta) \mapsto \{\Matrix{\upphi}, ~\Matrix{\uppsi}\}:= \{
        \varphi_{1}(\alpha)  \cdots  \varphi_{m}(\alpha),~ \psi_{1}(\beta)  \cdots  \psi_{n}(\beta)
    \} \in \Real^{m + n}
\end{equation*}
followed by the transformation 
\begin{equation*}
        \{\Matrix{\upphi}, ~\Matrix{\uppsi}\} \mapsto \{
        \Matrix{\upphi^{\trans} F},~  \Matrix{\uppsi^{\trans} P}
    \} \in \Real^{|I| + |J|}.
\end{equation*}
This order-reduction transformation is ``learned'' by the ACA-like algorithm: the matrices $\Matrix{F}$ and $\Matrix{P}$ of the cross component \cref{eq:cross_batrix} depend on the index sets $I$ and $J$, and also on the inner product of the Hilbert space.

The \emph{decoder} computes the output via
\begin{equation*}
    \{ \Matrix{\upphi^{\trans} F},~\Matrix{\uppsi^{\trans} P}\} \mapsto \Matrix{\upphi^{\trans} F} \cdot \Batrix{A}(I,J) \cdot (\Matrix{\uppsi^{\trans} P})^{\trans} \in \Hilb.
\end{equation*}
This mapping is also ``learned'' by the ACA-like algorithm and depends on the selected index sets (their choice, in turn, depends on the inner product).

The resulting low-rank ROM \cref{eq:lr_rom} consists of a \emph{nonlinear} encoder and a multilinear decoder.

\paragraph{Interpolation on a subgrid}
The interpolation properties of function-valued cross approximations \cref{eq:cross_batrix} guarantee that the low-rank ROM \cref{eq:lr_rom} is exact on a subgrid: 
\begin{equation*}
    \hat{\eta}(\alpha_i, \beta_j) = \eta(\alpha_i, \beta_j), \quad i \in I, \quad j \in J.
\end{equation*}
This property suggests that the ``learning'' phase can be interpreted as an adaptive search of an ``important'' coarse subgrid within a fine grid and the coarsening of the interpolation basis.
Both steps depend on the specific inner product of the Hilbert space (\Cref{remark:ip}).

\paragraph{Comparison with the reduced-basis method}
The RB method in ROM~\cite{quarteroni2015reduced} seeks to approximate the parametric solution manifold by a low-dimensional linear subspace.
However, it does not provide an explicit formula for projecting the solution onto this subspace; instead, one must solve a Galerkin-projected PDE.
In the case of a linear PDE, both the encoder and decoder of the RB method are linear. 

The low-rank ROM \cref{eq:lr_rom} also identifies a linear subspace of $\Hilb$ spanned by the entries of $\Batrix{A}(I,J)$, and directly provides the coefficients of the corresponding linear combination.
Note that these coefficients do not arise from a projection onto this subspace but are obtained through a nonlinear encoding step.
It remains possible within our framework to subsequently solve a projected PDE, which makes the encoder in our approach physics-informed.
This is realized, for example, in the interpolatory tensor ROMs developed in~\cite{mamonov2022interpolatory}, which, however, employ a full tensor HOSVD for the reduction step.   
\section{Cross approximation of function-valued tensors}
\label{sec:cross_tucker}
In this section, we extend the previous discussion from two parameters and function-valued matrices to $d \geq 2$ parameters and \emph{function-valued tensors}.
This constitutes the main theoretical and conceptual contribution of the article.
To simplify presentation, while still conveying our core ideas, we will focus on the Tucker decomposition.
For an introduction to tensor decompositions, see, e.g., \cite{ballard2025tensor}.
The following lemma will be used repeatedly.

\begin{lemma}
\label{lemma:ranks}
Let $\Batrix{A} \in \Hilb^{m \times n}$ and $r = \Rank{c}{\Batrix{A}}$, and let $I \subseteq [m]$ and $J \subseteq [n]$.
\begin{enumerate}
    \item There exist $J_{\Batrix{A}} \subseteq [n]$ of cardinality $|J_{\Batrix{A}}| = r$ and $\Matrix{B} \in \F^{r \times n}$ such that
    \begin{equation*}
        \Batrix{A} = \Batrix{A}(:, J_{\Batrix{A}}) \Matrix{B}, \quad \Rank{c}{\Batrix{A}(:, J_{\Batrix{A}})} = r, \quad \rank{\Matrix{B}} = r.  
    \end{equation*}
    \item For all $\Matrix{C} \in \F^{k \times m}$ and $\Matrix{D} \in \F^{n \times l}$, $\Rank{c}{\Matrix{C} \Batrix{A} \Matrix{D}} \leq r$. 
    In particular,
    \begin{equation*}
        \Rank{c}{\Batrix{A}(I, J)} \leq r.
    \end{equation*}
    \item $\Batrix{A} = \Batrix{A}(:,J) \Batrix{A}(I,J)^{\pinv} \Batrix{A}(I,:)$ if and only if $\Rank{c}{\Batrix{A}(I, J)} = r$.
\end{enumerate}
\end{lemma}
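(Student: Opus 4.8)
The plan is to build the three claims on the single factorization promised by the first one, and to reduce the hardest identity to a computation in $\F^{r \times r}$. For the first claim I would realize the column-rank concretely: since $r = \Rank{c}{\Batrix{A}}$ counts the $\F$-linearly independent columns of $\Batrix{A}$ regarded as vectors in the $\F$-vector space $\Hilb^m$, I would let $J_{\Batrix{A}} \subseteq [n]$ index a maximal $\F$-linearly independent set of columns, so that $|J_{\Batrix{A}}| = r$ and $\Rank{c}{\Batrix{A}(:, J_{\Batrix{A}})} = r$ by construction. Maximality puts every column of $\Batrix{A}$ into the $\F$-span of $\Batrix{A}(:, J_{\Batrix{A}})$, which produces a coefficient matrix $\Matrix{B} \in \F^{r \times n}$ with $\Batrix{A} = \Batrix{A}(:, J_{\Batrix{A}})\Matrix{B}$. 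Restricting this identity to the columns indexed by $J_{\Batrix{A}}$ and using their independence forces $\Matrix{B}(:, J_{\Batrix{A}}) = \Matrix{I}_r$, so $\Matrix{B}$ contains $\Matrix{I}_r$ as a submatrix and therefore has the maximal possible rank $\rank{\Matrix{B}} = r$.

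For the second claim I would reuse $\Batrix{A} = \Batrix{A}(:, J_{\Batrix{A}})\Matrix{B}$ and write $\Matrix{C}\Batrix{A}\Matrix{D} = [\Matrix{C}\Batrix{A}(:, J_{\Batrix{A}})]\,[\Matrix{B}\Matrix{D}]$, a product of a function-valued matrix in $\Hilb^{k \times r}$ with a scalar matrix $\Matrix{B}\Matrix{D} \in \F^{r \times l}$. Each column of $\Matrix{C}\Batrix{A}\Matrix{D}$ is then an $\F$-linear combination of the $r$ columns of $\Matrix{C}\Batrix{A}(:, J_{\Batrix{A}})$, so its column space is spanned by at most $r$ vectors and $\Rank{c}{\Matrix{C}\Batrix{A}\Matrix{D}} \leq r$. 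The displayed special case follows by taking $\Matrix{C}$ and $\Matrix{D}^\trans$ to be the row-selection matrices extracting $I$ and $J$, so that $\Batrix{A}(I,J) = \Matrix{C}\Batrix{A}\Matrix{D}$.

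The third claim is the interpolation identity and carries the real work. Setting $\Batrix{X} := \Batrix{A}(:, J_{\Batrix{A}}) \in \Hilb^{m \times r}$ and $\Matrix{Y} := \Matrix{B} \in \F^{r \times n}$ from the first claim, I would restrict to obtain $\Batrix{A}(:,J) = \Batrix{X}\Matrix{Q}$, $\Batrix{A}(I,:) = \Batrix{P}\Matrix{Y}$, and $\Batrix{A}(I,J) = \Batrix{P}\Matrix{Q}$, where $\Batrix{P} := \Batrix{X}(I,:) \in \Hilb^{|I| \times r}$ and $\Matrix{Q} := \Matrix{Y}(:,J) \in \F^{r \times |J|}$. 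Since the columns of $\Batrix{P}\Matrix{Q}$ lie in the $\F$-span of the columns of $\Batrix{P}$, while its column space also equals $\Batrix{P}\cdot\mathrm{range}(\Matrix{Q})$, one gets $\Rank{c}{\Batrix{A}(I,J)} \leq \min\{\Rank{c}{\Batrix{P}}, \rank{\Matrix{Q}}\} \leq r$, so $\Rank{c}{\Batrix{A}(I,J)} = r$ holds if and only if $\Batrix{P}$ has full column-rank $r$ and $\Matrix{Q}$ has full row-rank $r$. For the ``if'' direction I would substitute the factorization into the right-hand side and reduce the goal $\Batrix{A}(:,J)\Batrix{A}(I,J)^\pinv\Batrix{A}(I,:) = \Batrix{A}$ to the single scalar identity $\Matrix{Q}\,\Batrix{A}(I,J)^\pinv\,\Batrix{P} = \Matrix{I}_r$, after which $\Batrix{X}\Matrix{Q}\,\Batrix{A}(I,J)^\pinv\,\Batrix{P}\Matrix{Y} = \Batrix{X}\Matrix{Y} = \Batrix{A}$. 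To verify it I would take the function-valued SVD $\Batrix{A}(I,J) = \Batrix{U}\Matrix{\Sigma}\Matrix{V}^\ast$ with $\Batrix{U} \in \Hilb^{|I| \times r}$, invertible $\Matrix{\Sigma} \in \Real^{r \times r}$, and $\Matrix{V} \in \F^{|J| \times r}$, so $\Batrix{A}(I,J)^\pinv = \Matrix{V}\Matrix{\Sigma}^{-1}\Batrix{U}^\ast$; because $\Batrix{P}$ and $\Batrix{U}$ span the same $r$-dimensional column space, there is an invertible $\Matrix{T} \in \F^{r \times r}$ with $\Batrix{P} = \Batrix{U}\Matrix{T}$, and matching $\Batrix{A}(I,J) = \Batrix{P}\Matrix{Q}$ against the SVD while using $\Batrix{U}^\ast\Batrix{U} = \Matrix{I}_r$ gives $\Matrix{Q} = \Matrix{T}^{-1}\Matrix{\Sigma}\Matrix{V}^\ast$. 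Substituting yields $\Matrix{Q}\,\Batrix{A}(I,J)^\pinv\,\Batrix{P} = \Matrix{T}^{-1}\Matrix{\Sigma}\Matrix{V}^\ast\Matrix{V}\Matrix{\Sigma}^{-1}\Batrix{U}^\ast\Batrix{U}\Matrix{T} = \Matrix{I}_r$ via $\Matrix{V}^\ast\Matrix{V} = \Batrix{U}^\ast\Batrix{U} = \Matrix{I}_r$.

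For the ``only if'' direction I would argue by a rank count: the image of $\Batrix{A}(I,J)^\pinv = \Matrix{V}\Matrix{\Sigma}^{-1}\Batrix{U}^\ast$ is $\mathrm{range}(\Matrix{V})$, of dimension $s := \Rank{c}{\Batrix{A}(I,J)}$, so $\Batrix{A}(I,J)^\pinv\Batrix{A}(I,:) \in \F^{|J| \times n}$ has rank at most $s$; then the assumed identity $\Batrix{A} = \Batrix{A}(:,J)\,[\Batrix{A}(I,J)^\pinv\Batrix{A}(I,:)]$ forces $r = \Rank{c}{\Batrix{A}} \leq s$, and combined with $s \leq r$ from the second claim this gives $s = r$. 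The main obstacle is the ``if'' direction: transposition and pseudoinversion do not commute for function-valued matrices, so a reverse-order law $(\Batrix{P}\Matrix{Q})^\pinv = \Matrix{Q}^\pinv\Batrix{P}^\pinv$ is delicate to invoke here. Collapsing the whole computation to $\Matrix{Q}\,\Batrix{A}(I,J)^\pinv\,\Batrix{P} = \Matrix{I}_r$ and checking it directly through the SVD sidesteps this, since every factor there lands in $\F^{r \times r}$ and the orthonormality relations $\Batrix{U}^\ast\Batrix{U} = \Matrix{V}^\ast\Matrix{V} = \Matrix{I}_r$ finish the argument.
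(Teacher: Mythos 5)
Your proof is correct, but there is nothing in the paper to compare it against step by step: the paper's entire ``proof'' of \Cref{lemma:ranks} is a citation of Lemmas~3.26 and~6.3 of the companion work \cite{budzinskiy2025matrices}, so your proposal supplies a self-contained argument where the paper defers to a reference. Your route is sound throughout: part~1 is the basis-extraction rank factorization $\Batrix{A} = \Batrix{A}(:,J_{\Batrix{A}})\Matrix{B}$ carried out in the $\F$-vector space $\Hilb^m$, with $\Matrix{B}(:,J_{\Batrix{A}}) = \Matrix{I}_r$ forcing $\rank{\Matrix{B}} = r$; part~2 follows from $\Matrix{C}\Batrix{A}\Matrix{D} = [\Matrix{C}\Batrix{A}(:,J_{\Batrix{A}})][\Matrix{B}\Matrix{D}]$; and part~3 reduces, via $\Batrix{A}(:,J) = \Batrix{X}\Matrix{Q}$, $\Batrix{A}(I,:) = \Batrix{P}\Matrix{Y}$, $\Batrix{A}(I,J) = \Batrix{P}\Matrix{Q}$, to the scalar identity $\Matrix{Q}\,\Batrix{A}(I,J)^\pinv\,\Batrix{P} = \Matrix{I}_r$, verified through the function-valued SVD. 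Two features of your argument deserve explicit credit, since they are exactly where a naive argument would fail: the associativity steps such as $\Batrix{A}(I,J)^\pinv(\Batrix{P}\Matrix{Y}) = [\Batrix{A}(I,J)^\pinv\Batrix{P}]\Matrix{Y}$ are legitimate only because the pseudoinverse is a linear operator applied columnwise, and your substitution $\Batrix{P} = \Batrix{U}\Matrix{T}$ with invertible $\Matrix{T}$ deliberately avoids any reverse-order law $(\Batrix{P}\Matrix{Q})^\pinv = \Matrix{Q}^\pinv\Batrix{P}^\pinv$, which is unavailable here --- the same obstruction the paper flags when noting that transposition and pseudoinversion do not commute. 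Only two cosmetic gaps remain, each a one-line fix: you assert rather than derive that $\Batrix{P}$ and $\Batrix{U}$ share the same column span (both equal the column span of $\Batrix{A}(I,J)$, using that $\Matrix{Q}$ has full row rank and that $\Matrix{\Sigma}\Matrix{V}^\ast$ has full row rank), and in the ``only if'' direction the degenerate case $\Batrix{A}(I,J) = 0$, where the pseudoinverse is defined to be the zero operator, should be mentioned separately (it forces $\Batrix{A} = 0$, so the equivalence still holds).
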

\begin{proof}
See \cite[Lem.~3.26]{budzinskiy2025matrices} and \cite[Lem.~6.3]{budzinskiy2025matrices}.
\end{proof}

\subsection{Tucker decomposition}
Let $\Bensor{A} \in \Hilb^{n_1 \times \cdots \times n_d}$.
The \emph{mode-$k$ unfolding} of $\Bensor{A}$ is a function-valued matrix $\Batrix{A}_{(k)}$ of size $n_k \times \prod_{l \neq k} n_l$ whose columns are the mode-$k$ fibers of $\Bensor{A}$ arranged as
\begin{equation*}
    \Batrix{A}_{(k)}(i_k, \overline{i_1, \ldots, i_{k-1}, i_{k+1}, \ldots, i_d}) = \Bensor{A}(i_1, \ldots, i_d).
\end{equation*}
By $\overline{i_1, \ldots, i_{k-1}, i_{k+1}, \ldots, i_d}$, we denote a ``long'' index written in the big-endian notation. Let $\Matrix{B}_k \in \F^{m_k \times n_k}$.
The \emph{mode-$k$ product} of $\Bensor{A}$ with $\Matrix{B}_k$ is a function-valued tensor $\Bensor{C} = \Bensor{A} \times_k \Matrix{B}_k$ with the mode-$k$ unfolding given by $\Batrix{C}_{(k)} = \Matrix{B}_k \Batrix{A}_{(k)}$.
This product satisfies the standard commutativity property
\begin{equation*}
    (\Bensor{A} \times_k \Matrix{B}_k) \times_l \Matrix{B}_l = (\Bensor{A} \times_l \Matrix{B}_l) \times_k \Matrix{B}_l, \quad l \neq k.
\end{equation*}
The mode-$k$ unfolding of $\Bensor{C} = \Bensor{A} \times_1 \Matrix{B}_1 \times_2 \cdots \times_d \Matrix{B}_d$ can be represented as\footnote{The order of Kronecker products would be reversed if little-endian notation was used.}
\begin{equation}
\label{eq:tucker_mode_unfoldings}
    \Batrix{C}_{(k)} = \Matrix{B}_k \Batrix{A}_{(k)} (\Matrix{B}_1 \krp \cdots \krp \Matrix{B}_{k-1} \krp \Matrix{B}_{k+1} \krp \cdots \krp \Matrix{B}_d )^\trans.
\end{equation}
We define the \emph{Tucker rank} of $\Bensor{A}$ as the tuple of row-ranks of its mode unfoldings,
\begin{equation*}
    \Rank{T}{\Bensor{A}} = (\Rank{r}{\Batrix{A}_{(1)}}, \ldots, \Rank{r}{\Batrix{A}_{(d)}}),
\end{equation*}
which is consistent with the Tucker rank of ``usual'' tensors when $\Hilb = \F$.
Clearly, a componentwise inequality $\Rank{T}{\Bensor{A}} \preccurlyeq (n_1,\ldots,n_d)$ holds.
We say that a product
\begin{equation}
\label{eq:tucker}
    \Bensor{A} = \Bensor{G} \times_1 \Matrix{F}_1 \times_2 \cdots \times_d \Matrix{F}_d
\end{equation}
is a \emph{Tucker decomposition} of the function-valued tensor $\Bensor{A}$ with core $\Bensor{G} \in \Hilb^{r_1 \times \cdots \times r_d}$, factors $\Matrix{F}_s \in \F^{n_s \times r_s}$, and rank $(r_1, \ldots, r_d)$.
When the core of a Tucker decomposition is a subtensor of $\Bensor{A}$, we will call this decomposition \emph{Tucker-cross}.
There is a simple relationship between the Tucker rank of a function-valued tensor and the rank of its Tucker decomposition.

\begin{proposition}
\label{proposition:tucker_rank_properties}
If $\Bensor{A}$ admits a Tucker decomposition \cref{eq:tucker}, then $\Rank{T}{\Bensor{A}} \preccurlyeq \Rank{T}{\Bensor{G}}$. If this is a Tucker-cross decomposition, then $\Rank{T}{\Bensor{A}} = \Rank{T}{\Bensor{G}}$.
\end{proposition}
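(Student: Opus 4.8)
The plan is to prove both statements modewise, reducing every claim about the row-rank of a mode unfolding to a statement about the column-rank of its transpose, which is precisely the quantity controlled by \Cref{lemma:ranks}(2). Fix a mode $k$ and abbreviate $\Matrix{M}_k = \Matrix{F}_1 \krp \cdots \krp \Matrix{F}_{k-1} \krp \Matrix{F}_{k+1} \krp \cdots \krp \Matrix{F}_d$, so that \cref{eq:tucker_mode_unfoldings} applied to the Tucker decomposition \cref{eq:tucker} reads $\Batrix{A}_{(k)} = \Matrix{F}_k \Batrix{G}_{(k)} \Matrix{M}_k^\trans$. Since the Hilbert-space structure enters only through the column-rank, the whole argument is essentially a bookkeeping exercise on top of the lemma.

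For the inequality $\Rank{T}{\Bensor{A}} \preccurlyeq \Rank{T}{\Bensor{G}}$, I would transpose this identity to obtain $\Batrix{A}_{(k)}^\trans = \Matrix{M}_k \Batrix{G}_{(k)}^\trans \Matrix{F}_k^\trans$, which exhibits $\Batrix{A}_{(k)}^\trans$ as the product of the scalar matrix $\Matrix{M}_k$, the function-valued matrix $\Batrix{G}_{(k)}^\trans$, and the scalar matrix $\Matrix{F}_k^\trans$. The multiplication clause of \Cref{lemma:ranks}(2) then gives $\Rank{c}{\Batrix{A}_{(k)}^\trans} \leq \Rank{c}{\Batrix{G}_{(k)}^\trans}$, which by the definition of the row-rank as the column-rank of the transpose is exactly $\Rank{r}{\Batrix{A}_{(k)}} \leq \Rank{r}{\Batrix{G}_{(k)}}$. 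As $k$ was arbitrary, the componentwise inequality follows.

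For the equality in the Tucker-cross case, I would additionally use that the core is now a subtensor, $\Bensor{G} = \Bensor{A}(I_1, \ldots, I_d)$ for some index sets $I_s \subseteq [n_s]$. The key observation is that the mode-$k$ unfolding $\Batrix{G}_{(k)}$ is then a \emph{submatrix} of $\Batrix{A}_{(k)}$: its rows are those of $\Batrix{A}_{(k)}$ indexed by $I_k$, and its columns are those indexed by the long indices ranging over $\prod_{l \neq k} I_l$. Transposing turns $\Batrix{G}_{(k)}^\trans$ into a submatrix of $\Batrix{A}_{(k)}^\trans$, so the submatrix clause of \Cref{lemma:ranks}(2) yields $\Rank{c}{\Batrix{G}_{(k)}^\trans} \leq \Rank{c}{\Batrix{A}_{(k)}^\trans}$, i.e., $\Rank{r}{\Batrix{G}_{(k)}} \leq \Rank{r}{\Batrix{A}_{(k)}}$. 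Combining this with the inequality of the previous paragraph produces equality in every mode, hence $\Rank{T}{\Bensor{A}} = \Rank{T}{\Bensor{G}}$.

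The only delicate point, and the one I would verify most carefully, is the submatrix claim of the last paragraph: one must check that restricting $\Bensor{A}$ to $I_1 \times \cdots \times I_d$ and then unfolding in mode $k$ produces exactly the rows and columns of the full unfolding indexed, respectively, by $I_k$ and by the sub-block of long indices over $\prod_{l \neq k} I_l$. This amounts to tracking the big-endian long-index convention through the definition of the unfolding; once that identification is in place, both inequalities are immediate consequences of \Cref{lemma:ranks}(2), and I expect no genuine analytic difficulty beyond it.
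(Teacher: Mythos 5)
Your proof is correct and takes essentially the same route as the paper's: the componentwise inequality follows from \cref{eq:tucker_mode_unfoldings} combined with \Cref{lemma:ranks}(2), and the equality in the Tucker-cross case follows from applying the submatrix clause of the same lemma to each mode unfolding of the subtensor core, giving the reverse inequality $\Rank{T}{\Bensor{G}} \preccurlyeq \Rank{T}{\Bensor{A}}$. The paper's proof is simply a terser version of yours, leaving implicit the transposition bookkeeping and the identification of $\Batrix{G}_{(k)}$ as a submatrix of $\Batrix{A}_{(k)}$ that you spell out.
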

\begin{proof}
The inequality follows from \cref{eq:tucker_mode_unfoldings} and \Cref{lemma:ranks}. If $\Bensor{G}$ is a subtensor of $\Bensor{A}$, then \Cref{lemma:ranks} applied to every mode unfolding yields $\Rank{T}{\Bensor{G}} \preccurlyeq \Rank{T}{\Bensor{A}}$, so the Tucker ranks coincide. 
\end{proof}

The following result shows that there always exists a \emph{minimal} Tucker-cross decomposition, i.e., whose rank is equal to $\Rank{T}{\Bensor{A}}$.
We assume further throughout this section that $\Bensor{A} \neq 0$.

\begin{proposition}
\label{proposition:tucker_minimal_cross}
Let $\Bensor{A} \in \Hilb^{n_1 \times \cdots \times n_d}$ and $\Rank{T}{\Bensor{A}} = (r_1, \ldots, r_d)$. There exist index sets $\Index{I}_k \subseteq [n_k]$ of cardinality $|\Index{I}_k| = r_k$ and matrices $\Matrix{F}_k \in \F^{n_k \times r_k}$ of full column-rank such that
\begin{equation*}
    \Bensor{A} = \Bensor{A}(\Index{I}_1, \ldots, \Index{I}_d) \times_1 \Matrix{F}_1 \times_2 \cdots \times_d \Matrix{F}_d.
\end{equation*}
\end{proposition}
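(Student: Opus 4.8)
The plan is to assemble the decomposition one mode at a time and then glue the single-mode factorizations together using the commutativity of mode products. The starting point is \Cref{lemma:ranks}(1), which I apply not to the mode unfoldings directly but to their transposes. Since $\Rank{r}{\Batrix{A}_{(k)}} = \Rank{c}{\Batrix{A}_{(k)}^\trans} = r_k$, the lemma furnishes an index set $\Index{I}_k \subseteq [n_k]$ with $|\Index{I}_k| = r_k$ and a matrix $\Matrix{B}_k \in \F^{r_k \times n_k}$ of full rank $r_k$ such that $\Batrix{A}_{(k)}^\trans = \Batrix{A}_{(k)}^\trans(:, \Index{I}_k)\,\Matrix{B}_k$. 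Transposing and setting $\Matrix{F}_k = \Matrix{B}_k^\trans \in \F^{n_k \times r_k}$ (which then has full column-rank $r_k$, as required by the statement) yields the single-mode identity
\begin{equation*}
    \Batrix{A}_{(k)} = \Matrix{F}_k\, \Batrix{A}_{(k)}(\Index{I}_k, :).
\end{equation*}

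Next I would recast this matrix identity in mode-product language. Writing $\Matrix{S}_k \in \F^{r_k \times n_k}$ for the row-selection matrix whose rows are the standard basis vectors indexed by $\Index{I}_k$, the restricted subtensor has mode-$k$ unfolding $\Batrix{A}_{(k)}(\Index{I}_k,:)$, so $\Bensor{A} \times_k \Matrix{S}_k$ is precisely the restriction of $\Bensor{A}$ to $\Index{I}_k$ along mode $k$. The single-mode identity above then reads $\Matrix{F}_k \Matrix{S}_k \Batrix{A}_{(k)} = \Batrix{A}_{(k)}$, that is, $\Bensor{A} \times_k (\Matrix{F}_k \Matrix{S}_k) = \Bensor{A}$ for every $k$. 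In other words, each $\Matrix{P}_k := \Matrix{F}_k \Matrix{S}_k \in \F^{n_k \times n_k}$ acts in mode $k$ as an (oblique) projection that fixes $\Bensor{A}$.

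The gluing step is then purely formal. Because mode products in distinct modes commute and each $\Matrix{P}_k$ fixes $\Bensor{A}$, a straightforward induction on the number of applied factors gives
\begin{equation*}
    \Bensor{A} = \Bensor{A} \times_1 \Matrix{P}_1 \times_2 \cdots \times_d \Matrix{P}_d.
\end{equation*}
Splitting each mode-$k$ factor by the composition rule, whereby a product with $\Matrix{F}_k \Matrix{S}_k$ equals a product with $\Matrix{S}_k$ followed by a product with $\Matrix{F}_k$, and then using inter-mode commutativity to move every selection product $\times_k \Matrix{S}_k$ to the front, the selection products collapse to the single subtensor $\Bensor{A}(\Index{I}_1, \ldots, \Index{I}_d) = \Bensor{A} \times_1 \Matrix{S}_1 \times_2 \cdots \times_d \Matrix{S}_d$, leaving exactly
\begin{equation*}
    \Bensor{A} = \Bensor{A}(\Index{I}_1, \ldots, \Index{I}_d) \times_1 \Matrix{F}_1 \times_2 \cdots \times_d \Matrix{F}_d.
\end{equation*}

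I expect the only delicate point to be the bookkeeping in this last rearrangement: the factorization $\Matrix{P}_k = \Matrix{F}_k \Matrix{S}_k$ must be split \emph{within} a single mode, where the order of $\Matrix{S}_k$ before $\Matrix{F}_k$ matters, whereas the reshuffling \emph{across} modes relies on inter-mode commutativity. Keeping these two kinds of moves separate, and confirming that the accumulated selection products indeed extract the subtensor $\Bensor{A}(\Index{I}_1, \ldots, \Index{I}_d)$ rather than a reindexed variant, is where care is needed; everything else follows directly from \Cref{lemma:ranks} and the elementary properties of mode products recorded above.
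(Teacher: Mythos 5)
Your proof is correct and follows essentially the same route as the paper's: apply \Cref{lemma:ranks}(1) to the transposed mode-$k$ unfoldings to get $\Batrix{A}_{(k)} = \Matrix{F}_k\,\Batrix{A}_{(k)}(\Index{I}_k,:)$, rewrite this as $\Bensor{A} = \Bensor{A} \times_k (\Matrix{F}_k \Matrix{S}_k)$ with a row-selection matrix $\Matrix{S}_k$, and chain these identities over all modes to collapse the selections into the subtensor $\Bensor{A}(\Index{I}_1,\ldots,\Index{I}_d)$. You merely spell out the transposition and the final mode-product bookkeeping that the paper leaves implicit.
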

\begin{proof}
For every $k$, \Cref{lemma:ranks} shows that there exist $\Matrix{F}_k \in \F^{n_k \times r_k}$ of rank $r_k$ and $\Index{I}_k \subseteq [n_k]$ of cardinality $|\Index{I}_k| = r_k$ such that $\Batrix{A}_{(k)} = \Matrix{F}_k \Batrix{A}_{(k)}(\Index{I}_k,:)$. Note that $\Batrix{A}_{(k)}(\Index{I}_k,:) = \Matrix{S}_k \Batrix{A}_{(k)}$, where $\Matrix{S}_k = \Matrix{I}_{n_k}(\Index{I}_k,:)$ is a row-selection matrix, so $\Bensor{A} = \Bensor{A} \times_k \Matrix{F}_k \Matrix{S}_k$. Since $k$ is arbitrary,
\begin{equation*}
    \Bensor{A} = \Bensor{A} \times_1 \Matrix{F}_1 \Matrix{S}_1 \times_2 \cdots \times_d \Matrix{F}_d \Matrix{S}_d = \Bensor{A}(\Index{I}_1, \ldots, \Index{I}_d) \times_1 \Matrix{F}_1 \times_2 \cdots \times_d \Matrix{F}_d.
\end{equation*}
\end{proof}

\subsection{Tucker-cross approximation}
Now, we extend the cross approximation \cref{eq:cross_batrix} to function-valued tensors.
Consider $\Bensor{G} = \Bensor{A}(\Index{I}_1, \ldots, \Index{I}_d)$ at index sets $\Index{I}_k \subseteq [n_k]$.
For every $k$, we introduce a function-valued matrix
\begin{equation}
\label{eq:rows}
    \Batrix{R}_k = \Row{\Bensor{A}}{\Index{I}_1, \ldots, \Index{I}_d}{k} = \big[ \Bensor{A}(\Index{I}_1, \ldots, \Index{I}_{k-1}, :, \Index{I}_{k+1}, \ldots, \Index{I}_{d})\big]_{(k)}^\trans
\end{equation}
of size $\prod_{l \neq k} |\Index{I}_l| \times n_k$, whose rows consist of mode-$k$ fibers.
We then define the \emph{Tucker-cross approximation} of $\Bensor{A}$ at $(\Index{I}_1, \ldots, \Index{I}_d)$ as
\begin{equation}
\label{eq:cross_tucker}
    \Cross{\Bensor{A}}{\Index{I}_1, \ldots, \Index{I}_d}{T} = \Bensor{G} \times_1 \Big[ \big(\Batrix{G}_{(1)}^\trans\big)^\pinv \Batrix{R}_1 \Big]^\trans \times_2 \cdots \times_d \Big[ \big(\Batrix{G}_{(d)}^\trans\big)^\pinv \Batrix{R}_d \Big]^\trans \in \Hilb^{n_1 \times \cdots \times n_d}.
\end{equation}
When $d = 2$, this is exactly the cross approximation \cref{eq:cross_batrix}. 

\begin{theorem}
\label{theorem:tucker_cross_exact}
Let $\Bensor{B} = \Cross{\Bensor{A}}{\Index{I}_1, \ldots, \Index{I}_d}{T}$. Then
\begin{enumerate}
    \item $\Bensor{B}(\Index{I}_1, \ldots, \Index{I}_d) = \Bensor{G}$ and $\Rank{T}{\Bensor{B}} = \Rank{T}{\Bensor{G}}$;
    \item $\Row{\Bensor{B}}{\Index{I}_1, \ldots, \Index{I}_d}{k} = \Batrix{R}_k$ if and only if $\Rank{r}{\Batrix{G}_{(k)}} = \Rank{c}{\Batrix{R}_k}$;
    \item $\Bensor{B} = \Bensor{A}$ if and only if $\Rank{T}{\Bensor{G}} = \Rank{T}{\Bensor{A}}$.
\end{enumerate}
\end{theorem}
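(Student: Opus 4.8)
The plan is to reduce all three assertions to the function-valued matrix interpolation identity in the third item of \Cref{lemma:ranks}, exploiting that the $d$ mode factors of $\Bensor{B}$ act independently and that each reduces to a one-mode cross interpolation. First I would rewrite the Tucker-cross approximation as an operation on $\Bensor{A}$ itself. Since $\Bensor{G} = \Bensor{A}(\Index{I}_1,\ldots,\Index{I}_d) = \Bensor{A}\times_1\Matrix{S}_1\times_2\cdots\times_d\Matrix{S}_d$ with the row-selection matrices $\Matrix{S}_k = \Matrix{I}_{n_k}(\Index{I}_k,:)$, composing mode products yields $\Bensor{B} = \Bensor{A}\times_1\Matrix{E}_1\times_2\cdots\times_d\Matrix{E}_d$, where $\Matrix{E}_k = \Matrix{F}_k\Matrix{S}_k$ and $\Matrix{F}_k^\trans = (\Batrix{G}_{(k)}^\trans)^\pinv\Batrix{R}_k$. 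I would also record, from \cref{eq:rows}, that $\Batrix{R}_k = \Batrix{A}_{(k)}(:,\Index{J}_k)^\trans$ for the long-index set $\Index{J}_k = \prod_{l\neq k}\Index{I}_l$, and that $\Batrix{M}_k := \Batrix{G}_{(k)}^\trans = \Batrix{R}_k(:,\Index{I}_k)$. Its SVD then gives the two structural facts driving everything: $\Matrix{F}_k^\trans(:,\Index{I}_k) = \Batrix{M}_k^\pinv\Batrix{M}_k$ is the orthogonal projector onto the row space of $\Batrix{M}_k$, so that $\Matrix{F}_k(\Index{I}_k,:)$ fixes $\Batrix{G}_{(k)}$; while $\Batrix{M}_k\Batrix{M}_k^\pinv$ is the function-valued orthogonal projector onto the column space of $\Batrix{M}_k$, a subspace of the column space of $\Batrix{R}_k$.

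For the first statement I would compute $\Bensor{B}(\Index{I}_1,\ldots,\Index{I}_d) = \Bensor{G}\times_1\Matrix{P}_1\times_2\cdots\times_d\Matrix{P}_d$ with $\Matrix{P}_k = \Matrix{F}_k(\Index{I}_k,:)$. By the projector fact above, $\Matrix{P}_k\Batrix{G}_{(k)} = \Batrix{G}_{(k)}$, so every mode product leaves $\Bensor{G}$ unchanged and $\Bensor{B}(\Index{I}_1,\ldots,\Index{I}_d) = \Bensor{G}$. Consequently $\Bensor{B} = \Bensor{G}\times_1\Matrix{F}_1\times_2\cdots\times_d\Matrix{F}_d$ is a Tucker-cross decomposition of $\Bensor{B}$, and \Cref{proposition:tucker_rank_properties} gives $\Rank{T}{\Bensor{B}} = \Rank{T}{\Bensor{G}}$.

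For the second statement I would unfold $\Row{\Bensor{B}}{\Index{I}_1,\ldots,\Index{I}_d}{k}$ via \cref{eq:tucker_mode_unfoldings}: restricting every mode $l\neq k$ of $\Bensor{B}$ to $\Index{I}_l$ and taking the mode-$k$ unfolding produces $\Matrix{F}_k\Batrix{G}_{(k)}\Matrix{Q}^\trans$ with $\Matrix{Q} = \Matrix{P}_1\krp\cdots\krp\Matrix{P}_{k-1}\krp\Matrix{P}_{k+1}\krp\cdots\krp\Matrix{P}_d$. Since each $\Matrix{P}_l$ fixes $\Batrix{G}_{(l)}$, one obtains $\Matrix{Q}\Batrix{G}_{(k)}^\trans = \Batrix{G}_{(k)}^\trans = \Batrix{M}_k$, so transposing gives $\Row{\Bensor{B}}{\Index{I}_1,\ldots,\Index{I}_d}{k} = \Batrix{M}_k\Matrix{F}_k^\trans = \Batrix{M}_k\Batrix{M}_k^\pinv\Batrix{R}_k$, the columnwise projection of $\Batrix{R}_k$ onto the column space of $\Batrix{M}_k$. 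This equals $\Batrix{R}_k$ exactly when every column of $\Batrix{R}_k$ already lies in that column space; as the column space of $\Batrix{M}_k = \Batrix{R}_k(:,\Index{I}_k)$ is contained in that of $\Batrix{R}_k$, this happens if and only if the two have equal dimension, i.e. $\Rank{r}{\Batrix{G}_{(k)}} = \Rank{c}{\Batrix{R}_k}$.

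The third statement is the crux, and its forward direction is immediate from the first: $\Bensor{B} = \Bensor{A}$ forces $\Rank{T}{\Bensor{A}} = \Rank{T}{\Bensor{B}} = \Rank{T}{\Bensor{G}}$. For the converse I would prove $\Bensor{A}\times_k\Matrix{E}_k = \Bensor{A}$ for each $k$ separately; since mode products in distinct modes commute, peeling these off one at a time collapses $\Bensor{B} = \Bensor{A}\times_1\Matrix{E}_1\times_2\cdots\times_d\Matrix{E}_d$ to $\Bensor{A}$. The single-mode identity $\Matrix{E}_k\Batrix{A}_{(k)} = \Batrix{A}_{(k)}$ reads $\Matrix{F}_k\Batrix{A}_{(k)}(\Index{I}_k,:) = \Batrix{A}_{(k)}$; transposing it and substituting $\Matrix{F}_k^\trans$ turns it, with $\Batrix{D} = \Batrix{A}_{(k)}^\trans$, into
\begin{equation*}
    \Batrix{D}(:,\Index{I}_k)\,\Batrix{D}(\Index{J}_k,\Index{I}_k)^\pinv\,\Batrix{D}(\Index{J}_k,:) = \Batrix{D},
\end{equation*}
which is precisely the interpolation identity of the third item of \Cref{lemma:ranks} for $\Batrix{D}$ with row set $\Index{J}_k$ and column set $\Index{I}_k$. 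It holds if and only if $\Rank{c}{\Batrix{D}(\Index{J}_k,\Index{I}_k)} = \Rank{c}{\Batrix{D}}$, that is $\Rank{r}{\Batrix{G}_{(k)}} = \Rank{r}{\Batrix{A}_{(k)}}$, which is exactly the $k$-th component of the hypothesis $\Rank{T}{\Bensor{G}} = \Rank{T}{\Bensor{A}}$. I expect the main obstacle to be the careful bookkeeping of transposes in this reduction---because transposition and pseudoinversion do not commute for function-valued matrices---together with the observation that the cross factor $\Matrix{F}_k$, although assembled only from the small restricted blocks $\Batrix{G}_{(k)}$ and $\Batrix{R}_k$, is exactly the coefficient matrix that reconstructs the full unfolding $\Batrix{A}_{(k)}$ from its rows indexed by $\Index{I}_k$.
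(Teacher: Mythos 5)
Your proposal is correct and takes essentially the same route as the paper's proof: part 1 via the pseudoinverse/projector property and \Cref{proposition:tucker_rank_properties}, part 2 via the identity $\Row{\Bensor{B}}{\Index{I}_1, \ldots, \Index{I}_d}{k} = \Batrix{G}_{(k)}^\trans \big(\Batrix{G}_{(k)}^\trans\big)^\pinv \Batrix{R}_k$ together with the rank condition, and part 3 by deducing the forward direction from part 1 and the converse by applying the third item of \Cref{lemma:ranks} to each transposed unfolding and peeling off the commuting mode products one at a time. The only differences are cosmetic: you derive the projector facts from the SVD and spell out the transposition bookkeeping explicitly, where the paper cites \Cref{lemma:ranks} and the definition of the pseudoinverse directly.
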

\begin{proof}
$\bm{1.}$ Note that $\Batrix{R}_k(:, \Index{I}_k) = \Batrix{G}_{(k)}^\trans$ and
\begin{equation*}
    \Bensor{B}(\Index{I}_1, \ldots, \Index{I}_d) = \Bensor{G} \times_1 \Big[ \big(\Batrix{G}_{(1)}^\trans\big)^\pinv \Batrix{G}_{(1)}^\trans \Big]^\trans \times_2 \cdots \times_d \Big[ \big(\Batrix{G}_{(d)}^\trans\big)^\pinv \Batrix{G}_{(d)}^\trans \Big]^\trans.
\end{equation*}
The equality $\Bensor{B}(\Index{I}_1, \ldots, \Index{I}_d) = \Bensor{G}$ follows from \cref{eq:tucker_mode_unfoldings} and the definition of the pseudoinverse. Then $\Cross{\Bensor{A}}{\Index{I}_1, \ldots, \Index{I}_d}{T}$ is a Tucker-cross decomposition of $\Bensor{B}$, so \Cref{proposition:tucker_rank_properties} applies.\\
\indent$\bm{2.}$ By definition \cref{eq:cross_tucker}, $\Row{\Bensor{B}}{\Index{I}_1, \ldots, \Index{I}_d}{k} = \Batrix{G}_{(k)}^\trans \big( \Batrix{G}_{(k)}^\trans \big)^\pinv \Batrix{R}_k$. By \Cref{lemma:ranks}, it holds $\Batrix{G}_{(k)}^\trans \big( \Batrix{G}_{(k)}^\trans \big)^\pinv \Batrix{R}_k = \Batrix{R}_k$ if and only if $\Rank{r}{\Batrix{G}_{(k)}} = \Rank{c}{\Batrix{R}_k}$.\\
\indent$\bm{3.}$ If $\Bensor{B} = \Bensor{A}$, see part one. If $\Rank{T}{\Bensor{G}} = \Rank{T}{\Bensor{A}}$, then \Cref{lemma:ranks} gives for every $k$ that
\begin{equation*}
    \Batrix{A}_{(k)} = \Big[ \big(\Batrix{G}_{(k)}^\trans\big)^\pinv \Batrix{R}_k \Big]^\trans \Matrix{S}_k \Batrix{A}_{(k)}, \quad \Bensor{A} = \Bensor{A} \times_k \Big[ \big(\Batrix{G}_{(k)}^\trans\big)^\pinv \Batrix{R}_k \Big]^\trans \Matrix{S}_k,
\end{equation*}
where $\Matrix{S}_k = \Matrix{I}_{n_k}(\Index{I}_k,:)$. Repeat this for each $k$ to finish the proof.
\end{proof}

\Cref{theorem:tucker_cross_exact} describes the interpolation properties of \cref{eq:cross_tucker}.
A Tucker-cross approximation of $\Bensor{A}$ recovers it exactly and it coincides with Tucker-cross decomposition whenever $\Bensor{A}(\Index{I}_1, \ldots, \Index{I}_d)$ preserves the Tucker rank.
Otherwise, the quality of Tucker-cross approximation depends on the choice of the index sets.

\subsection{Higher-order SVD}
For comparison and evaluation of the Tucker-cross approximation quality, it is important to have a reference approximant.
For ordinary tensors, the HOSVD algorithm~\cite{de2000multilinear} computes a low-rank Tucker decomposition that provides a \emph{quasi-optimal} approximation in the Frobenius norm.
For function-valued tensors, we use the generalization of the Frobenius norm:
\begin{equation*}
    \Norm{\Bensor{A}}{\ell_2(\Hilb)} = \left( \sum_{i_1 = 1}^{n_1} \cdots \sum_{i_d = 1}^{n_d} \Norm{\Bensor{A}(i_1, \ldots, i_d)}{\Hilb}^2 \right)^{1/2}.
\end{equation*}
Suppose that the approximation rank satisfies $(r_1, \ldots, r_d) \preccurlyeq \Rank{T}{\Bensor{A}}$.
For every $k$, let $\Batrix{A}_{(k)}^\trans = \Batrix{U}_k \Matrix{\Sigma}_k \Matrix{V}_k^\ast$ be an SVD of the transposed mode-$k$ unfolding.
Denote by $\Matrix{V}_{k,r_k} \in \F^{n_k \times r_k}$ the first $r_k$ columns of $\Matrix{V}_{k}$ and by $\Matrix{\Sigma}_{k, r_k} \in \Real^{r_k \times r_k}$ the upper left block of $\Matrix{\Sigma}_{r_k}$.
We define a HOSVD of $\Bensor{A}$ as a Tucker decomposition of rank $(r_1, \ldots, r_d)$:
\begin{align*}
    \hosvd{\Bensor{A}}{r_1, \ldots, r_d} &= \Bensor{G} \times_1 \overline{\Matrix{V}}_{1,r_1} \times_2 \cdots \times_d \overline{\Matrix{V}}_{d,r_d}, \\
    \Bensor{G} &= \Bensor{A} \times_1 \Matrix{V}_{1,r_1}^\trans \times_2 \cdots \times_d \Matrix{V}_{d,r_d}^\trans.
\end{align*}

\begin{theorem}
\label{theorem:hosvd}
Let $\Bensor{A} \in \Hilb^{n_1 \times \cdots \times n_d}$ and $r_1, \ldots, r_d \in \N$ be such that $(r_1, \ldots, r_d) \preccurlyeq \Rank{T}{\Bensor{A}}$. Let $\Bensor{B} = \hosvd{\Bensor{A}}{r_1, \ldots, r_d}$ and denote
\begin{equation*}
    \mathbb{M} = \set{\Bensor{C} \in \Hilb^{n_1 \times \cdots \times n_d}}{\Rank{T}{\Bensor{C}} \preccurlyeq (r_1, \ldots, r_d)}.    
\end{equation*}
Then
\begin{equation*}
    \Norm{\Bensor{A} - \Bensor{B}}{\ell_2(\Hilb)} \leq \bigg( \sum_{k = 1}^{d} \Norm{\Matrix{\Sigma}_k - \Matrix{\Sigma}_{k,r_k}}{\ell_2}^2 \bigg)^{1/2} \leq \sqrt{d} \min_{\Bensor{C} \in \mathbb{M}} \Norm{\Bensor{A} - \Bensor{C}}{\ell_2(\Hilb)}.
\end{equation*}
\end{theorem}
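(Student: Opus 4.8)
The plan is to realize $\Bensor{B}$ as the successive application, in each mode, of the orthogonal projection $\Matrix{P}_k := \overline{\Matrix{V}}_{k,r_k}\Matrix{V}_{k,r_k}^\trans$, and then to transport the classical De Lathauwer--De Moor--Vandewalle telescoping argument to the $\ell_2(\Hilb)$ setting. First I would substitute the core into the definition of the HOSVD to obtain $\Bensor{B} = \Bensor{A}\times_1\Matrix{P}_1\times_2\cdots\times_d\Matrix{P}_d$, and record the elementary facts that drive the proof. Because $\Matrix{V}_{k,r_k}$ has orthonormal columns, a short conjugation computation (using $\Matrix{V}_{k,r_k}^\trans\overline{\Matrix{V}}_{k,r_k} = \overline{\Matrix{V}_{k,r_k}^\ast\Matrix{V}_{k,r_k}} = \Matrix{I}_{r_k}$) shows that $\Matrix{P}_k$ is Hermitian and idempotent, i.e., an orthogonal projection acting in mode $k$; distinct-mode projections commute by commutativity of mode products. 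With respect to $\langle\cdot,\cdot\rangle_{\ell_2(\Hilb)}$, left multiplication by a scalar matrix $\Matrix{M}$ has adjoint $\Matrix{M}^\ast$, because the $\Hilb$-inner products of the untouched entries only see the scalar coefficients; hence $\Matrix{P}_k$ is self-adjoint in $\ell_2(\Hilb)$ as well, and every orthogonal projection is non-expansive in this norm. The assumption $(r_1,\ldots,r_d)\preccurlyeq\Rank{T}{\Bensor{A}}$ is used only to ensure each truncation $\Matrix{V}_{k,r_k}$ is well defined.

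Next I would set up the telescoping decomposition
\[
\Bensor{A}-\Bensor{B} = \sum_{k=1}^{d}\Bensor{E}_k, \qquad \Bensor{E}_k := \Bensor{A}\times_1\Matrix{P}_1\times_2\cdots\times_{k-1}\Matrix{P}_{k-1}\times_k(\Matrix{I}-\Matrix{P}_k),
\]
and show the $\Bensor{E}_k$ are mutually orthogonal in $\ell_2(\Hilb)$. For $k<l$, I would unfold in mode $k$: the term $\Bensor{E}_k$ carries the factor $(\Matrix{I}-\Matrix{P}_k)$ there, while $\Bensor{E}_l$ carries $\Matrix{P}_k$, and the adjoint identity gives $\langle(\Matrix{I}-\Matrix{P}_k)\,\cdot,\Matrix{P}_k\,\cdot\rangle = \langle\,\cdot,(\Matrix{I}-\Matrix{P}_k)^\ast\Matrix{P}_k\,\cdot\rangle = \langle\,\cdot,(\Matrix{I}-\Matrix{P}_k)\Matrix{P}_k\,\cdot\rangle = 0$. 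Pythagoras then yields $\Norm{\Bensor{A}-\Bensor{B}}{\ell_2(\Hilb)}^2 = \sum_k\Norm{\Bensor{E}_k}{\ell_2(\Hilb)}^2$. Since the projections $\Matrix{P}_1,\ldots,\Matrix{P}_{k-1}$ are non-expansive and commute with the mode-$k$ factor, $\Norm{\Bensor{E}_k}{\ell_2(\Hilb)}\le\Norm{\Bensor{A}\times_k(\Matrix{I}-\Matrix{P}_k)}{\ell_2(\Hilb)}$. Unfolding and transposing (both $\ell_2(\Hilb)$-isometries) turns the mode-$k$ factor into $\Matrix{I}-\Matrix{V}_{k,r_k}\Matrix{V}_{k,r_k}^\ast$ acting on $\Batrix{A}_{(k)}^\trans = \Batrix{U}_k\Matrix{\Sigma}_k\Matrix{V}_k^\ast$, which removes exactly the leading $r_k$ singular triplets and leaves residual $\ell_2(\Hilb)$-norm $(\sum_{j>r_k}\sigma_{k,j}^2)^{1/2}$ — this is the quantity denoted $\Norm{\Matrix{\Sigma}_k-\Matrix{\Sigma}_{k,r_k}}{\ell_2}$, evaluated via unitary invariance of the function-valued Frobenius norm and the function-valued SVD. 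Combining these estimates gives the first inequality.

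For the second inequality I would argue modewise. Fix any $\Bensor{C}\in\mathbb{M}$; then $\Rank{r}{\Batrix{C}_{(k)}}\le r_k$, so $\Batrix{C}_{(k)}^\trans$ has column-rank at most $r_k$. As unfolding and transposition are $\ell_2(\Hilb)$-isometries, $\Norm{\Bensor{A}-\Bensor{C}}{\ell_2(\Hilb)} = \Norm{\Batrix{A}_{(k)}^\trans-\Batrix{C}_{(k)}^\trans}{\ell_2(\Hilb)}$, which is bounded below by the best rank-$r_k$ approximation error of $\Batrix{A}_{(k)}^\trans$, namely $\Norm{\Matrix{\Sigma}_k-\Matrix{\Sigma}_{k,r_k}}{\ell_2}$, by the function-valued Eckart--Young--Mirsky optimality invoked from the matrix SVD theory recalled earlier. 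Thus $\Norm{\Matrix{\Sigma}_k-\Matrix{\Sigma}_{k,r_k}}{\ell_2}\le\Norm{\Bensor{A}-\Bensor{C}}{\ell_2(\Hilb)}$ for every $k$; squaring, summing over the $d$ modes, taking the square root, and minimizing over $\Bensor{C}\in\mathbb{M}$ produces the factor $\sqrt{d}$ and closes the proof.

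I expect the main obstacle to be the orthogonality step: making precise, in the function-valued inner product, that the scalar mode-$k$ factors $(\Matrix{I}-\Matrix{P}_k)$ and $\Matrix{P}_k$ carried by $\Bensor{E}_k$ and $\Bensor{E}_l$ interact only through their scalar coefficients while the $\Hilb$-valued entries are merely paired by $\langle\cdot,\cdot\rangle_\Hilb$. This hinges on the adjoint identity $\langle\Matrix{M}\Batrix{X},\Batrix{Y}\rangle_{\ell_2(\Hilb)}=\langle\Batrix{X},\Matrix{M}^\ast\Batrix{Y}\rangle_{\ell_2(\Hilb)}$ together with careful conjugation bookkeeping over $\F=\Clx$, so that $\Matrix{P}_k=\overline{\Matrix{V}}_{k,r_k}\Matrix{V}_{k,r_k}^\trans$ is genuinely a \emph{self-adjoint} orthogonal projection and not merely idempotent. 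The remaining steps are the standard telescoping and Eckart--Young arguments carried over verbatim to $\ell_2(\Hilb)$.
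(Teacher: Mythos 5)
Your proposal is correct and is essentially the proof the paper intends: the paper's own proof consists only of deferring to the $d=2$ case of Thm.~5.16 in the cited work of Budzinskiy on function-valued matrices and calling the general case a ``direct extension,'' and that extension is precisely your argument---mode-wise orthogonal projections $\Matrix{P}_k=\overline{\Matrix{V}}_{k,r_k}\Matrix{V}_{k,r_k}^{\trans}$, the telescoping/Pythagorean splitting of $\Bensor{A}-\Bensor{B}$, and the per-mode Eckart--Young lower bound. You also transport the ingredients to the $\ell_2(\Hilb)$ setting correctly, including the conjugation bookkeeping that makes $\Matrix{P}_k$ genuinely self-adjoint and the use of the function-valued SVD from the same reference for the mode-$k$ tail norm $\Norm{\Matrix{\Sigma}_k-\Matrix{\Sigma}_{k,r_k}}{\ell_2}$.
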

\begin{proof}
The proof is a direct extension of the $d=2$ case from \cite[Thm.~5.16]{budzinskiy2025matrices}.
\end{proof}

\subsection{Reduced-order modeling}
\label{subsec:tensor_rom}
We use Tucker-cross approximations \cref{eq:cross_tucker} to extend the low-rank ROM \cref{eq:lr_rom} from two to multiple parameters.
The setup of \Cref{subsec:rom_matrix} is repeated almost verbatim.
Consider a parameter-to-solution map $\eta : [0,1]^d \to \Hilb$, introduce one-dimensional grids $\alpha_{k,1}, \ldots, \alpha_{k, n_k}$ along each parametric dimension, and choose the corresponding interpolation basis functions $\varphi_{k,1}, \ldots, \varphi_{k, n_k}$.
The initial, ``implicit'' interpolation of $\eta$ is given by
\begin{align*}
    \eta(\alpha_1, \ldots, \alpha_d) &\approx \sum_{i_1 = 1}^{n_1} \cdots \sum_{i_d = 1}^{n_d} \eta(\alpha_{1,i_1}, \ldots, \alpha_{d,i_d}) \prod_{k = 1}^{d} \varphi_{k,i_k}(\alpha_k) \\
    &= \Bensor{A} \times_1 \Matrix{\upphi}_1(\alpha_1)^{\trans} \times_2 \cdots \times_d \Matrix{\upphi}_d(\alpha_d)^{\trans}
\end{align*}
with $\Bensor{A} \in \Hilb^{n_1 \times \cdots \times n_d}$ and $\Matrix{\upphi}_k(\alpha_k) = [\varphi_{k,1}(\alpha_k)~\cdots~\varphi_{k,n_k}(\alpha_k)]^{\trans}$.
The Tucker-cross approximation \eqref{eq:cross_tucker} of $\Bensor{A}$ gives
\begin{equation*}
    \Bensor{A} \approx \Bensor{A}(\Index{I}_1, \ldots, \Index{I}_d) \times_1 \Matrix{F}_1 \times_2 \cdots \times_d \Matrix{F}_d, \quad \Matrix{F}_k \in \Real^{n_k \times |I_k|},
\end{equation*}
which leads to the low-rank ROM
\begin{equation*}
    \hat{\eta}(\alpha_1, \ldots, \alpha_d) = \Bensor{A}(\Index{I}_1, \ldots, \Index{I}_d) \times_1 [\Matrix{\upphi}_1(\alpha_1)^{\trans} \Matrix{F}_1] \times_2 \cdots \times_d [\Matrix{\upphi}_d(\alpha_d)^{\trans} \Matrix{F}_d].
\end{equation*}
\Cref{theorem:tucker_cross_exact} implies that the ROM is interpolatory on a $I_1 \times \cdots \times I_d$ subgrid in the sense that
\begin{equation*}
    \hat{\eta}(\alpha_{1,i_1}, \ldots, \alpha_{d,i_d}) = \eta(\alpha_{1,i_1}, \ldots, \alpha_{d,i_d}), \quad (i_1, \ldots, i_d) \in I_1 \times \cdots \times I_d.
\end{equation*}
\section{Adaptive cross approximation of function-valued tensors}\label{sec4}
The algorithm we propose extends the algorithm of \cite{caiafa2010generalizing} from scalar-valued tensors to function-valued tensors.
Specifically, we extend the ACA-like algorithm for function-valued matrices proposed in~\cite{budzinskiy2025matrices} to function-valued tensors.
Its first ingredient is a \emph{rook-pivoting} procedure, which finds a large entry in a function-valued matrix by scanning its rows and columns (\Cref{alg:rook}).
Rook pivoting is de facto the standard way to select a new index pair in ACA \cite{dolgov2020parallel, shi2024distributed}.

\begin{algorithm2e}[ht]
\caption{Rook pivoting for function-valued matrices}
\label{alg:rook}
\DontPrintSemicolon
\KwIn{$\Batrix{A} \in \Hilb^{m \times n}$, starting column index $j_{*} \in [n]$, number of rounds $n_{\mathrm{rook}} \in \N_0$}
\For{$s = 1, \ldots, n_{\mathrm{rook}}$}{
    $i_* \gets \argmax_{i \in [m]} \Norm{\Batrix{A}(i, j_*)}{\Hilb}$\;
    $j_* \gets \argmax_{j \in [n]} \Norm{\Batrix{A}(i_*, j)}{\Hilb}$\;
}
\KwOut{new column index $j_*$}
\end{algorithm2e}

Assume that we wish to refine the cross approximation $\Bensor{B} = \Cross{\Bensor{A}}{\Index{I}_1, \ldots, \Index{I}_d}{T}$ of $\Bensor{A} \in \Hilb^{n_1 \times \cdots \times n_d}$ by expanding the index sets $\{ \Index{I}_k \}$.
The straightforward way is to apply rook pivoting to every unfolding of the residual $\Bensor{A} - \Bensor{B}$, but the corresponding complexity for the $k$th unfolding would be proportional to $n_k + \prod_{l \neq k} n_l$.
A standard way to reduce this complexity is to consider a subtensor. Specifically, given another collection of index sets $\{ \Index{I}_k' \}$, let us apply rook pivoting to $\Row{\Bensor{A} - \Bensor{B}}{\Index{I}_1', \ldots, \Index{I}_d'}{k}$ as defined in \cref{eq:rows}; the complexity is then proportional to $n_k + \prod_{l \neq k} |\Index{I}_l'|$.
We can now formulate the algorithm TuckerABC (\Cref{alg:abc}) of adaptive function-valued cross approximation in the Tucker format.\footnote{The name stands for Tucker Adaptive Bochner Cross.}

\begin{algorithm2e}[ht]
\caption{Adaptive function-valued cross approximation (TuckerABC)}
\label{alg:abc}
\DontPrintSemicolon
\KwIn{$\Bensor{A} \in \Hilb^{n_1 \times \cdots \times n_d}$, non-empty index sets $\{ \Index{I}_k' \}_{k = 1}^{d}$, number of iterations $n_{\mathrm{iter}} \in \N$, number of rook-pivoting rounds $n_{\mathrm{rook}} \in \N_0$}
$\Bensor{B} \gets 0, \Index{I}_1 \gets \emptyset, \ldots, \Index{I}_d \gets \emptyset$\;
\For{$s = 1, \ldots, n_{\mathrm{iter}}$}{
    \For{$k = 1, \ldots, d$}{
        $\Batrix{R}_k \gets \Row{\Bensor{A} - \Bensor{B}}{\Index{I}_1', \ldots, \Index{I}_d'}{k}$\;
        $j_k \gets \scup{draw}_k(n_k)$\tcc*{random or via predefined deterministic rule}
        $j_k \gets \scup{rook}(\Batrix{R}_k, j_k, n_{\mathrm{rook}})$\;
        $\Index{I}_k \gets \Index{I}_k \cup \{ j_k \}, \Index{I}_k' \gets \Index{I}_k' \cup \{ j_k \}$\;
    }
    $\Bensor{B} \gets \Cross{\Bensor{A}}{\Index{I}_1, \ldots, \Index{I}_d}{T}$\;
}
\KwOut{Tucker-cross approximation $\Bensor{B}$ such that $\Rank{T}{\Bensor{B}} \preccurlyeq (n_{\mathrm{iter}}, \ldots, n_{\mathrm{iter}})$}
\end{algorithm2e}

Note that the function-valued matrix $\Batrix{R}_k$ need not be formed explicitly in line 4, it is only required as an index-to-value map, which is evaluated $n_{\mathrm{rook}} (n_k + \prod_{l \neq k} |\Index{I}_l'|)$ times in line 6.
Each evaluation amounts to computing an entry of the Tucker-cross approximation $\Bensor{B}$ (i.e., a linear combination in $\Hilb$) and an entry of $\Bensor{A}$ (e.g., a solution of a PDE, which can be then cached in memory).
The geometry of the Hilbert space $\Hilb$ enters TuckerABC through lines 6 and 8.

To compute the Tucker-cross approximation in line 8, and specifically to compute the pseudoinverse of a function-valued matrix, we use the modified Gram--Schmidt (MGS) algorithm with column pivoting and partial reorthogonalization.
When the pseudoinverse is applied, we compute the product with the adjoint of the orthogonal factor according to the formulas of MGS for numerical stability \cite[\S~2.4.4]{bjorck1996numerical}.
\section{Reduced-order modeling: Numerical experiments}\label{sec:numer}
In this section, we use TuckerABC to approximate the parameter-to-solution map of two \emph{nonlinear} parametric PDEs, as described in \Cref{subsec:tensor_rom}.
To assess the quality of the low-rank ROM, we consider parameter values only from the fine grid; that is, we compute the $\ell_2(\Hilb)$-norm approximation error for the function-valued tensor $\Bensor{A} \in \Hilb^{n_1 \times \cdots \times n_d}$.
Such an assessment requires forming the full tensor $\Bensor{A}$ and solving $\prod_{k} n_k$ PDEs.
The full tensor $\Bensor{A}$ is computed exclusively for the purpose of the error assessment, but it is not needed to \emph{construct} the low-rank ROM via TuckerABC.
The numerical PDEs are solved using FEniCS.\footnote{\url{https://fenicsproject.org/}}

\subsection{Parametric nonlinear Stokes equations}
Consider the two-dimensional Stokes equations in $\Omega \subset [0,1]^2$, which describe a stationary flow of a non-Newtonian fluid with dominating viscous forces. Let $\Matrix{u} : \Omega \to \Real^2$ be the velocity and $p : \Omega \to \Real$ be the pressure.
When there are no external forces, the Stokes equations read
\begin{equation}
\label{eq:stokes}
    -\mathrm{div}(2 \nu \epsilon(\Matrix{u}) - p \Matrix{I}_2) = 0,~\mathrm{div}(\Matrix{u}) = 0~\text{ in }~\Omega,
\end{equation}
where $\epsilon(\Matrix{u}) = \tfrac{1}{2} \nabla \Matrix{u} + \tfrac{1}{2} (\nabla \Matrix{u})^\trans$ and $\nu$ is the viscosity.
The viscosity law
\begin{equation*}
    \nu_{\alpha,\beta}(\Matrix{u}) = \nu_0 + \alpha \Norm{\nabla \Matrix{u}}{\ell_2}^{\beta}.
\end{equation*}
involves the rheological parameters $\alpha\in [0,1]$ (consistency coefficient) and $\beta$ (flow index).
We consider a shear thickening fluid with $\beta\in[0,1]$.

The boundary is partitioned as $\partial\Omega = \partial\Omega_D \cup \partial\Omega_N$ with the no-slip and pressure-drop boundary conditions:
\begin{equation}
\label{eq:stokes_bcs}
    \Matrix{u} = 0~\text{ on }~\partial\Omega_D, \qquad 
    -(2 \nu \epsilon(\Matrix{u}) - p \Matrix{I}_2) \Matrix{n} = p_0 \Matrix{n}~\text{ on }~\partial\Omega_N,
\end{equation}
where $\Matrix{n} : \partial\Omega_N \to \Real^2$ is the outward normal, and $p_0 : \partial\Omega_N \to \Real$ defines the pressure drop that drives the flow.
We set 
\begin{equation*}
    p_0 = \gamma(1 - x)~\text{ with }~\gamma\in[1,5].    
\end{equation*}

We consider the flow domain $\Omega$ and its triangulation from~\cite{rognes2017fenics} (cf.~\Cref{fig:dolfin_solution}).
The system~\crefrange{eq:stokes}{eq:stokes_bcs} is discretized using Taylor--Hood finite elements on a locally refined, regular triangulation, resulting in $6495$ degrees of freedom.
The mesh and the corresponding solution for $\nu_0 = 0.01$, $\alpha = \beta = 0$, and $\gamma = 1$ are shown in \Cref{fig:dolfin_solution}.

\begin{figure}[ht!]
\centering
\begin{subfigure}[b]{0.328\linewidth}
\centering
	\includegraphics[width=\linewidth]{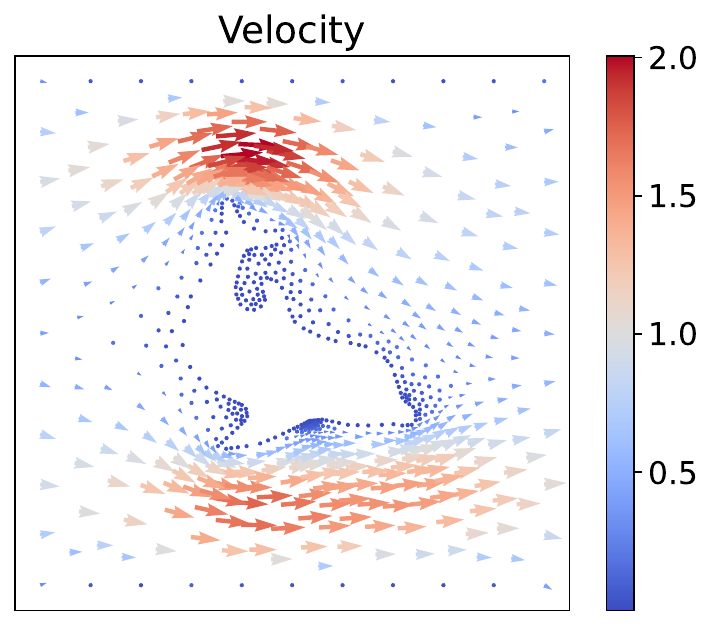}
\end{subfigure}%
\begin{subfigure}[b]{0.34\linewidth}
\centering
	\includegraphics[width=\linewidth]{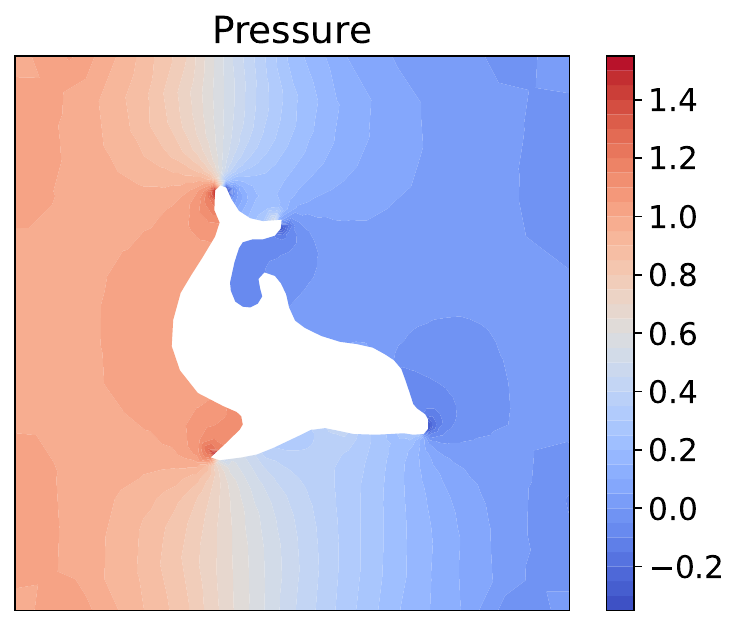}
\end{subfigure}%
\begin{subfigure}[b]{0.27\linewidth}
\centering
	\includegraphics[width=\linewidth]{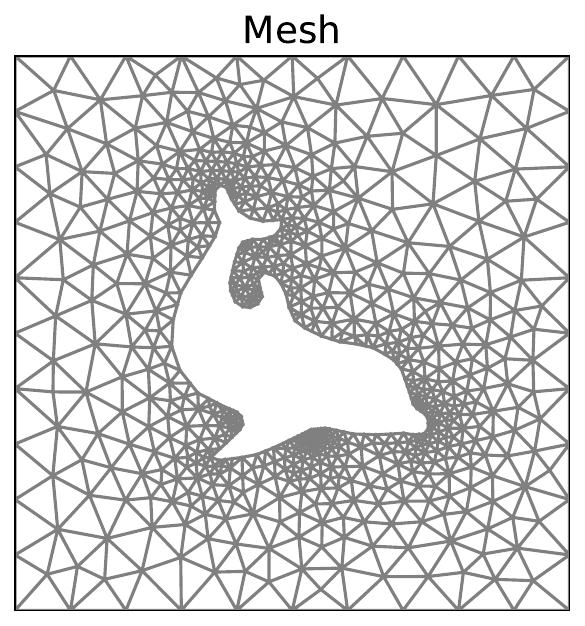}
\end{subfigure}
\caption{Numerical finite-element solution to \crefrange{eq:stokes}{eq:stokes_bcs} obtained with FEniCS.}
\label{fig:dolfin_solution}
\end{figure}

To build the ROM we fix a uniform $64 \times 64 \times 64$ grid in the parameter space $[0,1] \times [0,1] \times [1,5]$, leading to a function-valued tensor of size $64 \times 64 \times 64$ over the Sobolev space $\Hilb^1(\Omega; \Real^3)$, whose entries lie in a 6495-dimensional subspace.

We shall use this example to investigate how the hyperparameters of TuckerABC affect the quality of the resulting approximation.
To gather sufficient statistics in a reasonable amount of time, we perform a preliminary projection of the PDE solutions onto a 100-dimensional reduced basis in $\Hilb^1(\Omega; \Real^3)$, so that the resulting function-valued tensor is $\Bensor{A} \in (\Real^{100})^{64 \times 64 \times 64}$ with the standard inner product in $\Hilb = \Real^{100}$.

The main hyperparameters of TuckerABC are the number of rook-pivoting rounds $n_{\mathrm{rook}}$ and the initial auxiliary index sets $\{ \Index{I}_k' \}$, i.e., their cardinalities and content.
In \Cref{fig:dolfin_rb_params} (left), we vary $n_{\mathrm{rook}} \in \{ 0, 1 \}$ and the cardinalities $|\Index{I}_k'| \in \{ 2, 8 \}$, selecting the index sets $\{ \Index{I}_k' \}$ uniformly at random.
We observe that
\begin{itemize}
    \item larger index sets $\{ \Index{I}_k' \}$ do not lead to reduced approximation errors,
    \item rook pivoting is critical for good performance of the algorithm. At the same time, taking $n_{\mathrm{rook}} > 1$ did not improve the approximation (we do not present the corresponding results).
\end{itemize}

Next, we look into \emph{how} the auxiliary index sets $\{ \Index{I}_k' \}$ are formed and change the probability distribution from which the $\Index{I}_k'$ are sampled at random; specifically, we consider the (approximate) \emph{leverage scores} of the unfoldings \cite{drineas2012fast}.
The leverage scores of a matrix $\Matrix{Q} \in \Real^{n \times r}$ with orthonormal columns are defined as the vector of squared row-norms
\begin{equation*}
    \Matrix{p} = \tfrac{1}{r} \begin{bmatrix}
        \Norm{\Matrix{Q}(1, :)}{2}^2 & \cdots & \Norm{\Matrix{Q}(n, :)}{2}^2
    \end{bmatrix}^\trans.
\end{equation*}
The vector $\Matrix{p}$ is nonnegative and satisfies $\Norm{\Matrix{p}}{1} = 1$, so it represents a probability distribution on the set of indices $[n]$.
For an unfolding $\Batrix{A}_{(k)} \in \Hilb^{n_k \times \prod_{l \neq k} n_l}$, we can take the SVD of its transpose, $\Batrix{A}_{(k)}^\trans = \Batrix{U} \Matrix{\Sigma} \Matrix{V}^\ast$, and the leverage scores of $\Matrix{V}$.
To approximate the leverage scores, we select $|\Index{I}_k'|$ columns of $\Batrix{A}_{(k)}$ uniformly at random and compute the SVD of the smaller $|\Index{I}_k'| \times n_k$ function-valued matrix.
These approximate leverage scores are then used to draw $\{ \Index{I}_k' \}$.
The results in \Cref{fig:dolfin_rb_params} (right) show that the use of leverage scores can lead to lower approximation errors in TuckerABC, at the cost of computing $\sum_{k = 1}^d n_k |\Index{I}_k'|$ additional entries of $\Bensor{A}$.

\begin{figure}[ht]
\centering
\begin{subfigure}[b]{0.5\linewidth}
\centering
	\includegraphics[width=\linewidth]{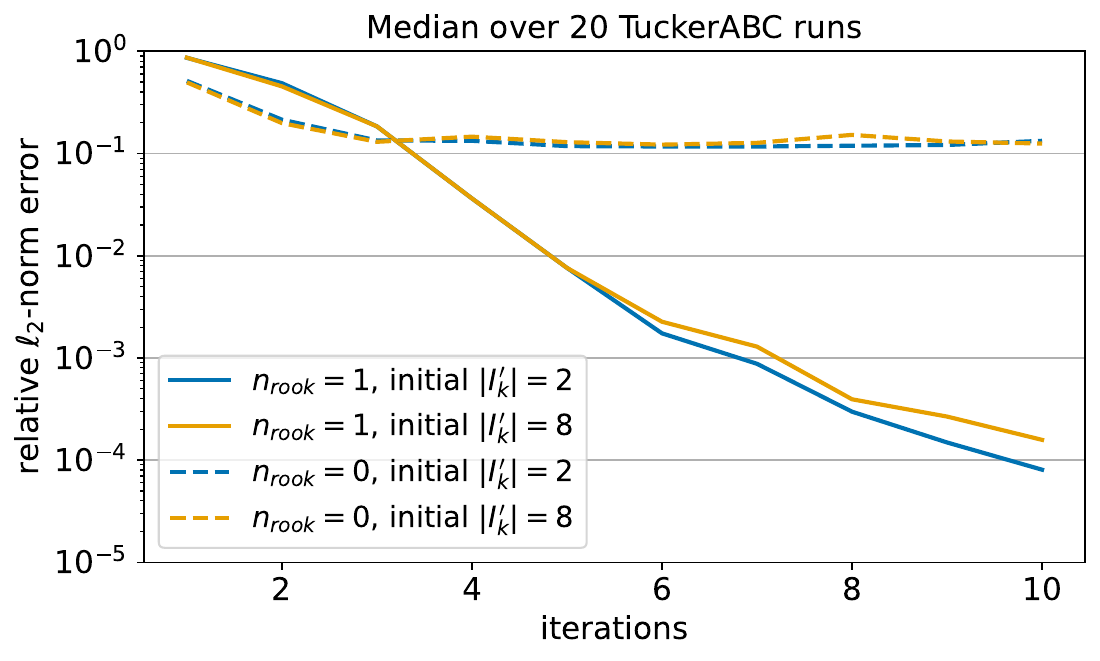}
\end{subfigure}%
\begin{subfigure}[b]{0.5\linewidth}
\centering
	\includegraphics[width=\linewidth]{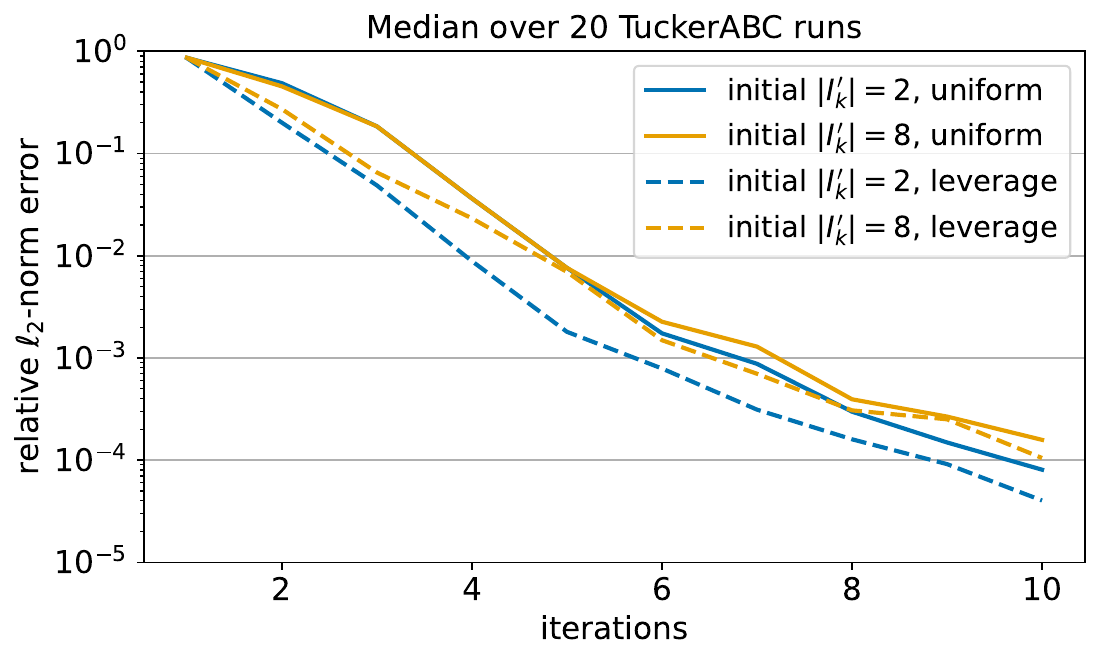}
\end{subfigure}
\caption{Relative $\ell_2(\Real^{100})$-norm approximation errors of TuckerABC as applied to the function-valued tensor $\Bensor{A} \in (\Real^{100})^{64 \times 64 \times 64}$ of reduced-basis solutions to \crefrange{eq:stokes}{eq:stokes_bcs} for different hyperparameters of the algorithm.}
\label{fig:dolfin_rb_params}
\end{figure}

Finally, we compare the approximation error of TuckerABC with the quasi-optimal approximation error of HOSVD (\Cref{theorem:hosvd}), choosing one of the 20 TuckerABC runs that correspond to the blue dashed line in \Cref{fig:dolfin_rb_params} (right).
For every iteration of TuckerABC, we take the Tucker rank of its approximant and compute the truncated HOSVD with this rank.
\Cref{tab:dolfin_err} shows that the errors of TuckerABC are about two times higher than the errors of HOSVD.
We can also see that the parameter $\beta$, which determines the degree of nonlinearity in viscosity, is the most ``difficult'' for low-rank approximation.

\begin{table}[htb]
\centering
\caption{Comparison of relative $\ell_2(\Real^{100})$-norm approximation errors of TuckerABC and HOSVD as applied to the function-valued tensor $\Bensor{A} \in (\Real^{100})^{64 \times 64 \times 64}$ of reduced-basis solutions to \crefrange{eq:stokes}{eq:stokes_bcs}.}
\label{tab:dolfin_err}
\begin{tabular*}{\textwidth}{@{\extracolsep\fill}lccc@{}}
    \toprule
    \multicolumn{2}{@{}c@{}}{TuckerABC} & \multicolumn{2}{@{}c@{}}{Approximation error} \\\cmidrule{1-2}\cmidrule{3-4}%
    Iterations & Rank & TuckerABC & HOSVD \\
    \midrule
    1  & (1, 1, 1) & $8.58 \cdot 10^{-1}$ & $4.66 \cdot 10^{-1}$ \\
    2  & (2, 2, 1) & $1.20 \cdot 10^{-1}$ & $6.88 \cdot 10^{-2}$ \\
    3  & (3, 3, 1) & $3.48 \cdot 10^{-2}$ & $1.83 \cdot 10^{-2}$ \\
    4  & (3, 4, 2) & $8.30 \cdot 10^{-3}$ & $4.00 \cdot 10^{-3}$ \\
    5  & (4, 5, 2) & $1.25 \cdot 10^{-3}$ & $7.98 \cdot 10^{-4}$ \\
    6  & (5, 6, 3) & $5.36 \cdot 10^{-4}$ & $2.36 \cdot 10^{-4}$ \\
    7  & (6, 7, 3) & $4.06 \cdot 10^{-4}$ & $1.07 \cdot 10^{-4}$ \\
    8  & (6, 8, 3) & $1.65 \cdot 10^{-4}$ & $6.73 \cdot 10^{-5}$ \\
    9  & (6, 9, 3) & $7.80 \cdot 10^{-5}$ & $3.77 \cdot 10^{-5}$ \\
    10 & (6, 10, 3) & $4.35 \cdot 10^{-5}$ & $2.35 \cdot 10^{-5}$ \\
    \bottomrule
\end{tabular*}
\end{table}

\subsection{Parametric Monge--Amp\`ere equation}
The second example we study is a Monge--Amp\`ere equation in a square $\Omega = [-1,1]^2$ with homogeneous Dirichlet boundary condition and parametric right-hand side:
\begin{equation}
\label{eq:monge_ampere}
    \det(\nabla^2 u) = f(\Matrix{x}; \alpha, \beta, \gamma), \quad \Matrix{x}\in\Omega, \quad \left. u \right|_{\partial \Omega} = 0.
\end{equation}
Here, $\nabla^2 u$ denotes the $2 \times 2$ Hessian of the function $u : \Omega \to \Real$.
The highly nonlinear nature of \cref{eq:monge_ampere} necessitates special techniques for its numerical solution.
We use the vanishing moment method \cite{feng2009mixed} that consists in approximating \cref{eq:monge_ampere} with a fourth-order quasilinear equation with a small parameter $\varepsilon > 0$:
\begin{equation}
\label{eq:monge_ampere_vmm}
    -\varepsilon \Delta^2 u + \det(\nabla^2 u) = f(\Matrix{x}; \alpha, \beta, \gamma), \quad \Matrix{x}\in\Omega, \quad \left. u \right|_{\partial \Omega} = 0, \quad \left. \Matrix{n}^\trans (\nabla^2 u) \Matrix{n} \right|_{\partial \Omega} = \varepsilon,
\end{equation}
where $\Delta^2$ is the biharmonic operator and $\Matrix{n} : \partial\Omega \to \Real^2$ is the outward normal.
For every strictly positive $f$, there exists a unique strictly convex solution of \cref{eq:monge_ampere_vmm}.

Following \cite{feng2009mixed}, we discretize \cref{eq:monge_ampere_vmm} using Hermann--Miyoshi-type mixed finite elements, so that the solution $u$ is approximated with second-order finite elements.
We consider a family of Gaussian functions as the right-hand side,
\begin{equation*}
    f(\Matrix{x}; \alpha, \beta, \gamma) = \exp\left( -\frac{(x_1 - \alpha)^2 + (x_2 - \beta)^2}{\gamma} \right),
\end{equation*}
with parameters $(\alpha, \beta, \gamma)\in [-0.8, 0.8] \times [-0.8, 0.8] \times [0.001, 0.1]$.
A uniform triangulation of the square domain $\Omega$ with 1250 finite elements is used, yielding $2601$ degrees of freedom for the numerical solution $u$.
The mesh and the solution for $\varepsilon = 0.01$, $\alpha = 0.7$, $\beta = 0.6$, and $\gamma = 0.01$ are plotted in \Cref{fig:ma_solution}.

\begin{figure}[ht!]
\centering
\begin{subfigure}[b]{0.365\linewidth}
\centering
	\includegraphics[width=\linewidth]{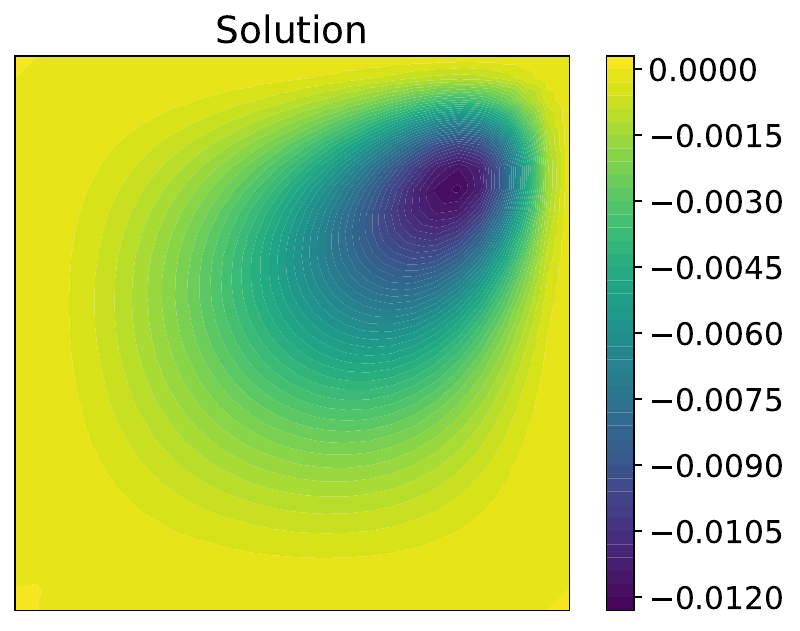}
\end{subfigure}%
\begin{subfigure}[b]{0.27\linewidth}
\centering
	\includegraphics[width=\linewidth]{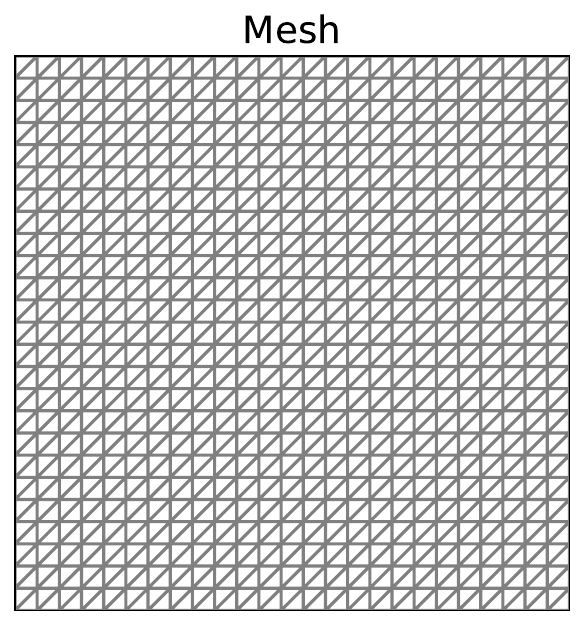}
\end{subfigure}
\caption{Numerical finite-element solution to \cref{eq:monge_ampere_vmm} obtained with FEniCS.}
\label{fig:ma_solution}
\end{figure}

With a uniform $50 \times 50 \times 50$ grid in the parameter domain we obtain $50 \times 50 \times 50$ solutions to \cref{eq:monge_ampere_vmm} with $\varepsilon = 0.01$ that we regard as a function-valued tensor $\Bensor{A}$ over the Sobolev space $\Hilb^1(\Omega)$.
To construct a low-rank ROM of the parameter-to-solution map, we apply TuckerABC to this function-valued tensor with $n_{\mathrm{rook}} = 1$ and auxiliary index sets $\{ \Index{I}_k' \}$ of cardinality 5 drawn uniformly at random.
The results in \Cref{tab:ma_err} correspond to a single random seed.

We use this example to assess how the approximation properties of the Tucker cross approximation~\cref{eq:cross_tucker} depend on the choice of inner product in the Hilbert space (\Cref{remark:ip}).
The first four columns in \Cref{tab:ma_err} have the same meaning as in \Cref{tab:dolfin_err}: they compare the $\ell_2(\Hilb^1)$-norm approximation errors obtained by TuckerABC with the quasi-optimal errors of HOSVD and show that the former are only $1.5$–$2.5$ times larger than the latter.
For the fifth column, we take the index sets selected by TuckerABC (the same as in the third column), compute the factors of~\cref{eq:cross_tucker} as if the entries of~$\Bensor{A}$ were vectors in~$\Real^{2601}$ equipped with the standard inner product, and then evaluate the resulting $\ell_2(\Hilb^1)$-norm approximation errors.
\Cref{tab:ma_err} shows that the quality of the inner-product-aware Tucker-cross approximation is consistently better after five or more iterations of TuckerABC.
However, the case with four iterations shows that this property is not universal, underscoring the importance of properly chosen indices.

\begin{table}[htb!]
\centering
\caption{Comparison of relative $\ell_2(\Hilb^{1})$-norm approximation errors of TuckerABC and HOSVD as applied to the function-valued tensor $\Bensor{A} \in (\Hilb^{1}(\Omega))^{50 \times 50 \times 50}$ of finite-element solutions to \cref{eq:monge_ampere_vmm}.}
\label{tab:ma_err}
\begin{tabular*}{\textwidth}{@{\extracolsep\fill}lcccc@{}}
    \toprule
    \multicolumn{2}{@{}c@{}}{TuckerABC} & \multicolumn{3}{@{}c@{}}{Approximation error} \\\cmidrule{1-2}\cmidrule{3-5}%
    Iterations & Rank & TuckerABC & HOSVD & TuckerABC ($\Real^{2601}$) \\
    \midrule
    4  & (4, 4, 1) & $8.93 \cdot 10^{-2}$ & $6.18 \cdot 10^{-2}$ & $8.83 \cdot 10^{-2}$ \\
    5  & (5, 5, 2) & $4.93 \cdot 10^{-2}$ & $2.05 \cdot 10^{-2}$ & $5.77 \cdot 10^{-2}$ \\
    6  & (6, 6, 3) & $1.84 \cdot 10^{-2}$ & $1.11 \cdot 10^{-2}$ & $2.36 \cdot 10^{-2}$ \\
    7  & (7, 7, 4) & $1.18 \cdot 10^{-2}$ & $6.01 \cdot 10^{-3}$ & $1.47 \cdot 10^{-2}$ \\
    8  & (8, 8, 5) & $6.80 \cdot 10^{-3}$ & $3.53 \cdot 10^{-3}$ & $7.75 \cdot 10^{-3}$ \\
    9  & (9, 9, 5) & $3.50 \cdot 10^{-3}$ & $2.05 \cdot 10^{-3}$ & $3.75 \cdot 10^{-3}$ \\
    10 & (10, 10, 6) & $2.04 \cdot 10^{-3}$ & $1.25 \cdot 10^{-3}$ & $2.25 \cdot 10^{-3}$ \\
    11 & (11, 11, 6) & $1.30 \cdot 10^{-3}$ & $7.75 \cdot 10^{-4}$ & $1.49 \cdot 10^{-3}$ \\
    12 & (12, 12, 7) & $8.52 \cdot 10^{-4}$ & $4.96 \cdot 10^{-4}$ & $9.00 \cdot 10^{-4}$ \\
    13 & (13, 13, 7) & $5.71 \cdot 10^{-4}$ & $3.26 \cdot 10^{-4}$ & $5.84 \cdot 10^{-4}$ \\
    14 & (14, 14, 8) & $4.14 \cdot 10^{-4}$ & $2.20 \cdot 10^{-4}$ & $4.28 \cdot 10^{-4}$ \\
    15 & (15, 15, 8) & $2.52 \cdot 10^{-4}$ & $1.51 \cdot 10^{-4}$ & $2.66 \cdot 10^{-4}$ \\
    \bottomrule
\end{tabular*}
\end{table}

In the next experiment, we investigate the stability of cross approximation produced by TuckerABC with respect to the spatial discretization of the PDE.
Since the framework of function-valued cross approximation \cref{eq:cross_tucker} applies to the exact solutions of \cref{eq:monge_ampere_vmm}, it is reasonable to expect that the rank and indices chosen by TuckerABC should stabilize for fine enough meshes.
To test this, we consider three more meshes similar to the one in \Cref{fig:ma_solution}, but with 128, 512, and 2048 triangular elements (they yield 289, 1089, and 4225 degrees of freedom for the numerical solution, respectively).
\Cref{tab:ma_mesh_refinement} shows that the rank and approximation error tend to be very stable for finer meshes.
A closer inspection reveals that, although TuckerABC chooses the same rank for all four meshes after 10 iterations, the index sets $\Index{I}_2$ and $\Index{I}_3$ chosen for the coarsest mesh slightly differ from the rest (while all four $\Index{I}_1$ coincide); see \Cref{tab:ma_indices}.

\begin{table}[htb!]
\centering
\caption{Comparison of approximation ranks and relative $\ell_2(\Hilb^{1})$-norm approximation errors of TuckerABC as applied to the function-valued tensor $\Bensor{A} \in (\Hilb^{1}(\Omega))^{50 \times 50 \times 50}$ of finite-element solutions to \cref{eq:monge_ampere_vmm} on different spatial meshes.}
\label{tab:ma_mesh_refinement}
\begin{tabular*}{\textwidth}{@{\extracolsep\fill}lccc@{}}
    \toprule
    Iterations & Mesh & TuckerABC rank & TuckerABC error  \\
    \midrule
    10 & 128  & (10, 10, 6) & $3.17 \cdot 10^{-3}$ \\
       & 512  & (10, 10, 6) & $1.97 \cdot 10^{-3}$ \\
       & 1250 & (10, 10, 6) & $2.04 \cdot 10^{-3}$ \\
       & 2048 & (10, 10, 6) & $2.05 \cdot 10^{-3}$ \\
    \midrule
    15 & 128  & (14, 15, 7) & $2.15 \cdot 10^{-3}$ \\
       & 512  & (15, 15, 8) & $2.60 \cdot 10^{-4}$ \\
       & 1250 & (15, 15, 8) & $2.52 \cdot 10^{-4}$ \\
       & 2048 & (15, 15, 8) & $2.60 \cdot 10^{-4}$ \\
    \bottomrule
\end{tabular*}
\end{table}

\begin{table}[htb!]
\centering
\caption{Comparison of index sets chosen by TuckerABC after 10 iterations as applied to the function-valued tensor $\Bensor{A} \in (\Hilb^{1}(\Omega))^{50 \times 50 \times 50}$ of finite-element solutions to \cref{eq:monge_ampere_vmm} on different spatial meshes.}
\label{tab:ma_indices}
\begin{tabular*}{\textwidth}{@{\extracolsep\fill}lcc@{}}
    \toprule
    Mesh & $\Index{I}_2$ & $\Index{I}_3$ \\
    \midrule
    128  & $\{ 1, 6, 15, 19, 25, 29, 35, 40, 45, 50 \}$ & $\{ 1, 10, 12, 16, 20, 50 \}$ \\
    512  & $\{ 1, 6, 14, 19, 25, 29, 34, 40, 44, 49 \}$ & $\{ 6, 10, 11, 16, 19, 50 \}$ \\
    1250 & $\{ 1, 6, 14, 19, 25, 29, 34, 40, 44, 49 \}$ & $\{ 6, 10, 11, 16, 19, 50 \}$ \\
    2048 & $\{ 1, 6, 14, 19, 25, 29, 34, 40, 44, 49 \}$ & $\{ 6, 10, 11, 16, 19, 50 \}$ \\
    \bottomrule
\end{tabular*}
\end{table}

A simple idea follows from the observation made above.
To make the offline phase more efficient, one can construct the low-rank ROM of the parameter-to-solution map using a coarser mesh,
\begin{equation*}
    \Bensor{A}_{\mathrm{coarse}} \approx \Bensor{A}_{\mathrm{coarse}}(\Index{I}_1, \ldots, \Index{I}_d) \times_1 \Matrix{F}_1 \times_2 \cdots \times_d \Matrix{F}_d,
\end{equation*}
and reuse the selected indices and computed scalar factors (encoder) for finer meshes:
\begin{equation*}
    \Bensor{A}_{\mathrm{fine}} \approx \Bensor{A}_{\mathrm{fine}}(\Index{I}_1, \ldots, \Index{I}_d) \times_1 \Matrix{F}_1 \times_2 \cdots \times_d \Matrix{F}_d.
\end{equation*}
Only $\prod_{k} |\Index{I}_k|$ fine-mesh PDE solutions need to be evaluated in this case, and no extra computations are required.
We compare the effect of the coarse-mesh size on the resulting error corresponding to the mesh of size 2048.
The results in \Cref{tab:ma_reuse_coarse} suggest that once a mesh is fine enough to resolve the features of PDE solutions, it can be used to build a low-rank ROM that remains suitable for further mesh refinements.

\begin{table}[htb!]
\centering
\caption{Comparison of relative $\ell_2(\Hilb^{1})$-norm approximation errors of low-rank ROMs on a fine mesh built via 10 iterations of TuckerABC as applied to the function-valued tensor $\Bensor{A} \in (\Hilb^{1}(\Omega))^{50 \times 50 \times 50}$ of finite-element solutions to \cref{eq:monge_ampere_vmm} on coarser meshes.}
\label{tab:ma_reuse_coarse}
\begin{tabular*}{\textwidth}{@{\extracolsep\fill}lcccc@{}}
    \toprule
    & \multicolumn{4}{@{}c@{}}{Initial coarser mesh} \\\cmidrule{2-5}%
    & 128 & 512 & 1250 & 2048 \\
    \midrule
    Error & $3.3504 \cdot 10^{-3}$ & $2.0521 \cdot 10^{-3}$ & $2.0528 \cdot 10^{-3}$ & $2.0529 \cdot 10^{-3}$ \\
    \bottomrule
\end{tabular*}
\end{table}

\bibliographystyle{siamplain}
\bibliography{}

\begin{thebibliography}{10}

\bibitem{adcock2022sparse}
{\sc B.~Adcock, S.~Brugiapaglia, and C.~G. Webster}, {\em Sparse Polynomial Approximation of High-Dimensional Functions}, vol.~25, SIAM, 2022, \url{https://doi.org/10.1137/1.9781611976885}.

\bibitem{bachmayr2018parametric}
{\sc M.~Bachmayr, A.~Cohen, and W.~Dahmen}, {\em Parametric {PDEs}: Sparse or low-rank approximations?}, IMA J Numer Anal, 38 (2018), pp.~1661--1708, \url{https://doi.org/10.1093/imanum/drx052}.

\bibitem{ballani2015hierarchical}
{\sc J.~Ballani and L.~Grasedyck}, {\em Hierarchical tensor approximation of output quantities of parameter-dependent {PDEs}}, SIAM/ASA J Uncertain Quantif, 3 (2015), pp.~852--872, \url{https://doi.org/10.1137/140960980}.

\bibitem{ballard2025tensor}
{\sc G.~Ballard and T.~G. Kolda}, {\em Tensor Decompositions for Data Science}, CUP, 2025, \url{https://doi.org/10.1017/9781009471664}.

\bibitem{bebendorf2000approximation}
{\sc M.~Bebendorf}, {\em Approximation of boundary element matrices}, Numer Math, 86 (2000), pp.~565--589, \url{https://doi.org/10.1007/PL00005410}.

\bibitem{berkooz1993proper}
{\sc G.~Berkooz, P.~Holmes, and J.~L. Lumley}, {\em The proper orthogonal decomposition in the analysis of turbulent flows}, Annu Rev Fluid Mech, 25 (1993), pp.~539--575, \url{https://doi.org/10.1146/annurev.fl.25.010193.002543}.

\bibitem{bhattacharya2021model}
{\sc K.~Bhattacharya, B.~Hosseini, N.~B. Kovachki, and A.~M. Stuart}, {\em Model reduction and neural networks for parametric {PDEs}}, SMAI J Comput Math, 7 (2021), pp.~121--157, \url{https://doi.org/10.5802/smai-jcm.74}.

\bibitem{biegler2003large}
{\sc L.~T. Biegler, O.~Ghattas, M.~Heinkenschloss, and B.~van Bloemen~Waanders}, eds., {\em Large-Scale PDE-Constrained Optimization}, Springer, 2003, \url{https://doi.org/10.1007/978-3-642-55508-4}.

\bibitem{bigoni2016spectral}
{\sc D.~Bigoni, A.~P. Engsig-Karup, and Y.~M. Marzouk}, {\em Spectral tensor-train decomposition}, SIAM J Sci Comput, 38 (2016), pp.~A2405--A2439, \url{https://doi.org/10.1137/15M1036919}.

\bibitem{bjorck1996numerical}
{\sc {\AA}.~Bj{\"o}rck}, {\em Numerical Methods for Least Squares Problems}, SIAM, 1996, \url{https://doi.org/10.1137/1.9781611971484}.

\bibitem{brezis2011functional}
{\sc H.~Brezis}, {\em Functional Analysis, Sobolev Spaces and Partial Differential Equations}, Springer, 2011, \url{https://doi.org/10.1007/978-0-387-70914-7}.

\bibitem{budzinskiy2025matrices}
{\sc S.~Budzinskiy}, {\em Matrices over a {Hilbert} space and their low-rank approximation}, 2025, \url{https://doi.org/10.48550/arXiv.2505.05134}.

\bibitem{bungartz2004sparse}
{\sc H.-J. Bungartz and M.~Griebel}, {\em Sparse grids}, Acta Numer, 13 (2004), pp.~147--269, \url{https://doi.org/10.1017/S0962492904000182}.

\bibitem{caiafa2010generalizing}
{\sc C.~F. Caiafa and A.~Cichocki}, {\em Generalizing the column--row matrix decomposition to multi-way arrays}, Linear Algebra Appl, 433 (2010), pp.~557--573, \url{https://doi.org/10.1016/j.laa.2010.03.020}.

\bibitem{cortinovis2020maximum}
{\sc A.~Cortinovis, D.~Kressner, and S.~Massei}, {\em On maximum volume submatrices and cross approximation for symmetric semidefinite and diagonally dominant matrices}, Linear Algebra Appl, 593 (2020), pp.~251--268, \url{https://doi.org/10.1016/j.laa.2020.02.010}.

\bibitem{de2000multilinear}
{\sc L.~De~Lathauwer, B.~De~Moor, and J.~Vandewalle}, {\em A multilinear singular value decomposition}, SIAM J Matrix Anal Appl, 21 (2000), pp.~1253--1278, \url{https://doi.org/10.1137/s0895479896305696}.

\bibitem{devore2021neural}
{\sc R.~DeVore, B.~Hanin, and G.~Petrova}, {\em Neural network approximation}, Acta Numer, 30 (2021), pp.~327--444, \url{https://doi.org/10.1017/S0962492921000052}.

\bibitem{dolgov2015polynomial}
{\sc S.~Dolgov, B.~N. Khoromskij, A.~Litvinenko, and H.~G. Matthies}, {\em Polynomial chaos expansion of random coefficients and the solution of stochastic partial differential equations in the tensor train format}, SIAM/ASA J Uncertain Quantif, 3 (2015), pp.~1109--1135, \url{https://doi.org/10.1137/140972536}.

\bibitem{dolgov2020parallel}
{\sc S.~Dolgov and D.~Savostyanov}, {\em Parallel cross interpolation for high-precision calculation of high-dimensional integrals}, Comput Phys Commun, 246 (2020), 106869, \url{https://doi.org/10.1016/j.cpc.2019.106869}.

\bibitem{dolgov2019hybrid}
{\sc S.~Dolgov and R.~Scheichl}, {\em A hybrid alternating least squares--{TT}-cross algorithm for parametric {PDEs}}, SIAM/ASA J Uncertain Quantif, 7 (2019), pp.~260--291, \url{https://doi.org/10.1137/17M1138881}.

\bibitem{dolgov2018direct}
{\sc S.~V. Dolgov, V.~A. Kazeev, and B.~N. Khoromskij}, {\em Direct tensor-product solution of one-dimensional elliptic equations with parameter-dependent coefficients}, Math Comp Simul, 145 (2018), pp.~136--155, \url{https://doi.org/10.1016/j.matcom.2017.10.009}.

\bibitem{drineas2012fast}
{\sc P.~Drineas, M.~Magdon-Ismail, M.~W. Mahoney, and D.~P. Woodruff}, {\em Fast approximation of matrix coherence and statistical leverage}, J Mach Learn Res, 13 (2012), pp.~3475--3506, \url{https://doi.org/10.5555/2503308.2503352}.

\bibitem{eigel2019non}
{\sc M.~Eigel, J.~Neumann, R.~Schneider, and S.~Wolf}, {\em Non-intrusive tensor reconstruction for high-dimensional random {PDEs}}, Comp Methods Appl Math, 19 (2019), pp.~39--53, \url{https://doi.org/10.1515/cmam-2018-0028}.

\bibitem{eigel2019variational}
{\sc M.~Eigel, R.~Schneider, P.~Trunschke, and S.~Wolf}, {\em Variational {Monte Carlo}—bridging concepts of machine learning and high-dimensional partial differential equations}, Adv Comput Math, 45 (2019), pp.~2503--2532, \url{https://doi.org/10.1007/s10444-019-09723-8}.

\bibitem{feng2009mixed}
{\sc X.~Feng and M.~Neilan}, {\em Mixed finite element methods for the fully nonlinear {Monge--Amp{\`e}re} equation based on the vanishing moment method}, SIAM J Numer Anal, 47 (2009), pp.~1226--1250, \url{https://doi.org/10.1137/070710378}.

\bibitem{goreinov1997theory}
{\sc S.~A. Goreinov, E.~E. Tyrtyshnikov, and N.~L. Zamarashkin}, {\em A theory of pseudoskeleton approximations}, Linear Algebra Appl, 261 (1997), pp.~1--21, \url{https://doi.org/10.1016/S0024-3795(96)00301-1}.

\bibitem{gorodetsky2019continuous}
{\sc A.~Gorodetsky, S.~Karaman, and Y.~Marzouk}, {\em A continuous analogue of the tensor-train decomposition}, Comput Methods Appl Mech Eng, 347 (2019), pp.~59--84, \url{https://doi.org/10.1016/j.cma.2018.12.015}.

\bibitem{hackbusch2019tensor}
{\sc W.~Hackbusch}, {\em Tensor Spaces and Numerical Tensor Calculus}, Springer, 2~ed., 2010, \url{https://doi.org/10.1007/978-3-030-35554-8}.

\bibitem{hamm2020perspectives}
{\sc K.~Hamm and L.~Huang}, {\em Perspectives on {CUR} decompositions}, Appl Comput Harmon Anal, 48 (2020), pp.~1088--1099, \url{https://doi.org/10.1016/j.acha.2019.08.006}.

\bibitem{han2024guaranteed}
{\sc R.~Han, P.~Shi, and A.~R. Zhang}, {\em Guaranteed functional tensor singular value decomposition}, J Am Stat Assoc, 119 (2024), pp.~995--1007, \url{https://doi.org/10.1080/01621459.2022.2153689}.

\bibitem{hesthaven2016certified}
{\sc J.~S. Hesthaven, G.~Rozza, and B.~Stamm}, {\em Certified Reduced Basis Methods for Parametrized Partial Differential Equations}, Springer, 2016, \url{https://doi.org/10.1007/978-3-319-22470-1}.

\bibitem{hesthaven2018non}
{\sc J.~S. Hesthaven and S.~Ubbiali}, {\em Non-intrusive reduced order modeling of nonlinear problems using neural networks}, J Comput Phys, 363 (2018), pp.~55--78, \url{https://doi.org/10.1016/j.jcp.2018.02.037}.

\bibitem{khoromskij2010quantics}
{\sc B.~N. Khoromskij and I.~Oseledets}, {\em Quantics-{TT} collocation approximation of parameter-dependent and stochastic elliptic {PDEs}}, Comput Methods Appl Math, 10 (2010), pp.~376--394, \url{https://doi.org/10.2478/cmam-2010-0023}.

\bibitem{khoromskij2011tensor}
{\sc B.~N. Khoromskij and C.~Schwab}, {\em Tensor-structured galerkin approximation of parametric and stochastic elliptic {PDEs}}, SIAM J Sci Comput, 33 (2011), pp.~364--385, \url{https://doi.org/10.1137/100785715}.

\bibitem{kressner2011low}
{\sc D.~Kressner and C.~Tobler}, {\em Low-rank tensor {Krylov} subspace methods for parametrized linear systems}, SIAM J Matrix Anal Appl, 32 (2011), pp.~1288--1316, \url{https://doi.org/10.1137/100799010}.

\bibitem{kutz2016dynamic}
{\sc J.~N. Kutz, S.~L. Brunton, B.~W. Brunton, and J.~L. Proctor}, {\em Dynamic Mode Decomposition: Data-Driven Modeling of Complex Systems}, SIAM, 2016, \url{https://doi.org/10.1137/1.9781611974508}.

\bibitem{larsen2024tensor}
{\sc B.~W. Larsen, T.~G. Kolda, A.~R. Zhang, and A.~H. Williams}, {\em Tensor decomposition meets {RKHS}: Efficient algorithms for smooth and misaligned data}, 2024, \url{https://doi.org/10.48550/arXiv.2408.05677}.

\bibitem{mamonov2022interpolatory}
{\sc A.~V. Mamonov and M.~A. Olshanskii}, {\em Interpolatory tensorial reduced order models for parametric dynamical systems}, Comput Methods Appl Mech Eng, 397 (2022), 115122, \url{https://doi.org/10.1016/j.cma.2022.115122}.

\bibitem{mamonov2024slice}
{\sc A.~V. Mamonov and M.~A. Olshanskii}, {\em Model order reduction of parametric dynamical systems by slice sampling tensor completion}, 2024, \url{https://doi.org/10.48550/arXiv.2411.07151}.

\bibitem{mamonov2024tensorial}
{\sc A.~V. Mamonov and M.~A. Olshanskii}, {\em Tensorial parametric model order reduction of nonlinear dynamical systems}, SIAM J Sci Comput, 46 (2024), pp.~A1850--A1878, \url{https://doi.org/10.1137/23M1553789}.

\bibitem{mamonov2025priori}
{\sc A.~V. Mamonov and M.~A. Olshanskii}, {\em A priori analysis of a tensor {ROM} for parameter dependent parabolic problems}, SIAM J Numer Anal, 63 (2025), pp.~239--261, \url{https://doi.org/10.1137/23M1616844}.

\bibitem{olshanskii2025approximating}
{\sc M.~A. Olshanskii and L.~G. Rebholz}, {\em Approximating a branch of solutions to the {Navier--Stokes} equations by reduced-order modeling}, J Comput Phys, 524 (2025), 113728, \url{https://doi.org/10.1016/j.jcp.2025.113728}.

\bibitem{quarteroni2015reduced}
{\sc A.~Quarteroni, A.~Manzoni, and F.~Negri}, {\em Reduced Basis Methods for Partial Differential Equations: An Introduction}, Springer, 2015, \url{https://doi.org/10.1007/978-3-319-15431-2}.

\bibitem{rognes2017fenics}
{\sc M.~E. Rognes and G.~N. Wells}, {\em Fenics course. {Lecture} 3: Static nonlinear {PDEs}}.
\newblock \url{https://pub.fenicsproject.org/course/lectures/2017-nordic-phdcourse}, 2017.
\newblock Accessed 16/11/2025.

\bibitem{shi2024distributed}
{\sc T.~Shi, D.~Hayes, and J.-M. Qiu}, {\em Distributed memory parallel adaptive tensor-train cross approximation}, 2024, \url{https://doi.org/10.48550/arXiv.2407.11290}.

\bibitem{shustin2022semi}
{\sc P.~F. Shustin and H.~Avron}, {\em Semi-infinite linear regression and its applications}, SIAM J Matrix Anal Appl, 43 (2022), pp.~479--511, \url{https://doi.org/10.1137/21M1411950}.

\bibitem{strossner2024approximation}
{\sc C.~Str{\"o}ssner, B.~Sun, and D.~Kressner}, {\em Approximation in the extended functional tensor train format}, Adv Comput Math, 50 (2024), 54, \url{https://doi.org/10.1007/s10444-024-10140-9}.

\bibitem{tang2024tensor}
{\sc R.~Tang, T.~Kolda, and A.~R. Zhang}, {\em Tensor decomposition with unaligned observations}, 2024, \url{https://doi.org/10.48550/arXiv.2410.14046}.

\bibitem{tarantola2005inverse}
{\sc A.~Tarantola}, {\em Inverse Problem Theory and Methods for Model Parameter Estimation}, SIAM, 2005, \url{https://doi.org/10.1137/1.9780898717921}.

\bibitem{torzoni2024digital}
{\sc M.~Torzoni, M.~Tezzele, S.~Mariani, A.~Manzoni, and K.~E. Willcox}, {\em A digital twin framework for civil engineering structures}, Comput Methods Appl Mech Eng, 418 (2024), 116584, \url{https://doi.org/10.1016/j.cma.2023.116584}.

\bibitem{townsend2015continuous}
{\sc A.~Townsend and L.~N. Trefethen}, {\em Continuous analogues of matrix factorizations}, Proc R Soc A: Math Phys Eng Sci, 471 (2015), 20140585, \url{https://doi.org/10.1098/rspa.2014.0585}.

\bibitem{wendland2004scattered}
{\sc H.~Wendland}, {\em Scattered Data Approximation}, CUP, 2004, \url{https://doi.org/10.1017/CBO9780511617539}.

\bibitem{willard2020integrating}
{\sc J.~Willard, X.~Jia, S.~Xu, M.~Steinbach, and V.~Kumar}, {\em Integrating physics-based modeling with machine learning: A survey}, 2020, \url{https://doi.org/10.48550/arXiv.2003.04919}.

\end{thebibliography}

\end{document}